\DeclareMathAlphabet{\mathbbold}{U}{bbold}{m}{n}
\renewcommand{\le}{\leqslant}
\renewcommand{\leq}{\leqslant}
\renewcommand{\ge}{\geqslant}
\renewcommand{\geq}{\geqslant}
\newtheorem{Theorem}{Theorem}[section]
\newtheorem*{Theorem*}{Theorem}
\newtheorem{Lemma}[Theorem]{Lemma}
\newtheorem{Proposition}[Theorem]{Proposition}
\theoremstyle{definition}
\newtheorem{Definition}[Theorem]{Definition}
\theoremstyle{remark}
\newtheorem{Remark}[Theorem]{Remark}
\numberwithin{equation}{section}
\theoremstyle{plain}
\newcommand{\thistheoremname}{}
\newtheorem{genericthm}[Theorem]{\thistheoremname}
\newtheorem*{genericthm*}{\thistheoremname}
\newenvironment{namedthm*}[1]
{\renewcommand{\thistheoremname}{#1}%
	\begin{genericthm*}}
	{\end{genericthm*}}
\newcommand{\R}{\mathbb{R}}
\newcommand{\B}{\mathbf{B}}
\newcommand{\C}{\mathbb{C}}
\newcommand{\N}{\mathbb{N}}
\newcommand{\D}{\mathbb{D}}
\renewcommand{\H}{\mathbb{H}}
\newcommand{\Q}{\mathbb{Q}}
\newcommand{\diam}{\operatorname{diam}}
\newcommand{\card}{\operatorname{card}}
\renewcommand{\:}{\colon}
\newcommand{\sub}{\subseteq}
\renewcommand{\P}{\mathbb{P}}
\renewcommand{\:}{\colon}
\renewcommand{\=}{\coloneqq}
\newcommand{\SLE}{\mathrm{SLE}}
\newcommand{\CLE}{\mathrm{CLE}}
\newcommand{\LQG}{\mathrm{LQG}}
\begin{document}
\title{Quasisymmetric geometry of low-dimensional random spaces}

\author{Gefei Cai}
\address{Beijing International Center for Mathematical Research, Peking University, No. 5 Yiheyuan Road, Haidian District, Beijing, China, 100871}
\email{caigefei1917@pku.edu.cn}

\author{Wen-Bo Li}
\address{Beijing International Center for Mathematical Research, Peking University, No. 5 Yiheyuan Road, Haidian District, Beijing, China, 100871}
\email{liwenbo@bicmr.pku.edu.cn}

\author{Tim Mesikepp}
\address{Department of Mathematics, Western Washington University, 516 High Street, Bellingham, Washinton, USA, 98225}
\email{mesiket@wwu.edu}

\subjclass[2020]{Primary 30L10, 60D05}
\keywords{Brownian motion, SLE, CLE carpet, quasiarc, Sierpi\'nski carpet, quasisymmetry}

\begin{abstract}
    We initiate a study of the quasisymmetric uniformization of naturally arising random fractals and show that many of them fall outside the realm of quasisymmetric uniformization to simple canonical spaces. We begin with the trace, the graph of Brownian motion, and various variants of the Schramm-Loewner evolution $\SLE_\kappa$ for $\kappa>0$, and show that a.s. neither is a quasiarc. After that, we study the conformal loop ensemble $\CLE_\kappa$, $\kappa \in (\frac{8}{3}, 4]$, and show that the collection of all points outside the loops is a.s. homeomorphic to the standard Sierpi\'nski carpet, but not quasisymmetrically equivalent to a round carpet.
\end{abstract}

\maketitle

\section{Introduction}\label{Section: Introduction}

The uniformization theorem in complex analysis states that every simply connected Riemann surface is conformally equivalent to either the open disk, the complex plane, or the Riemann sphere.  It is natural to seek parallel results in more flexible categories of mappings, such as the \emph{quasisymmetric} mappings.  Recall a homeomorphism $f: X \to Y$ between metric spaces is an \emph{$\eta$-quasisymmetry} if for all $x, y, z \in X$ with $x \neq z$ one has
\begin{align}\label{def:quasisymmetry}
	\frac{d_Y(f(x),f(y))}{d_Y(f(x),f(z))} \leq \eta \left(\frac{d_X(x,y)}{d_X(x,z)}\right),
\end{align}
where $\eta:[0, \infty) \longrightarrow [0, \infty)$ is an increasing continuous function such that  $\eta(0)=0$ and $\eta(t)\underset{t\to\infty}{\longrightarrow}\infty$. A mapping $f$ is called \emph{quasisymmetric} if it is $\eta$-quasisymmetric for some \emph{distortion function} $\eta$.  A quasisymmetry thus distorts relative distances by a bounded amount, see \cite[Proposition~$10.8$]{Hei01}. These maps were introduced by Tukia and V\"ais\"al\"a \cite{TV80} as a generalization of conformal and quasiconformal mappings to the setting of general metric spaces.

It is natural to ask when two metric spaces $X$ and $Y$ are quasisymmetrically equivalent, given that they are homeomorphic to each other. In another point of view, thinking along the lines of the uniformization theorem, we may ask when we can quasisymmetrically uniformize a given metric space $X$ to some canonical target $Y$. We may also ask for a quasisymmetric characterization of $Y$ in the case that a unique uniformization does not exist. This of course depends on the context how the term “standard” is precisely defined. These questions are partially motivated by problems from geometric group theory. See \cite[Section~$3$, $4$ and $5$]{Bon06}.

Given the progress in understanding the quasisymmetric geometry of many deterministic objects, a natural step is to study quasisymmetric geometry in the stochastic world. In this paper we essentially study three random processes, namely, the Brownian motion, the Schramm-Loewner evolution $\SLE_\kappa$ for $\kappa>0$, and the conformal loop ensemble $\CLE_\kappa$ for $\kappa \in (\frac{8}{3}, 4]$.  We ask whether we can quasisymmetrically uniformize them to line segments, in the first two cases, or to a round carpet, in the latter. In each case our results give a negative answer.

In what follows, we provide the necessary background and rigorously state our results. We begin by presenting results for random Cantor sets in Section \ref{Sec:IntroCantor}, with a particular focus on those arising from fractal percolation . Section \ref{Sec:IntroArc} addresses the line uniformization case for Brownian motion and $\SLE_\kappa$, while Section \ref{Sec:IntroCarpet} covers the carpet case. For completeness, we review the established results for the sphere in Section \ref{Sec:IntroSphere}.

\subsection{Quasi-Cantor set}\label{Sec:IntroCantor}

The study of random Cantor sets can be traced back to Mandelbrot \cite{Man74, Man83}. He introduced a statistically self-similar family of random Cantor sets, i.e., the \emph{fractal percolation}. Fix a number $0< p<1$ and define a random set in the following way. We divide $[0,1]^n$ into $l^n$ many disjoint cubes (except at the boundary) whose coordinates are consists of $l$-adic numbers. Each cube is kept with probability $p$, and the kept cubes are further divided, and again each subcube is kept with probability $p$. The resulting set is a fractal percolation and we denote it by $\Lambda(l, p)$. It is well known that if $p \leq n^{-l}$, then $\Lambda(l, p)$ is a.s. empty and if $p > n^{-l}$, then $\Lambda(l, p)$ is either empty or $\dim_H(\Lambda(l, p)) = n + \frac{\log p}{\log l}$ with positive probability \cite[Theorem~$3.7.1$]{BP17}. 

A topological space is homeomorphic to the standard Cantor set if it is perfect, nonempty, compact, metrizable and totally disconnected. It follows \cite{CCD88} that there exists a $p_c \in (0,1)$ such that $\Lambda(l, p)$ is totally disconnected conditioned on that $\Lambda(l, p)$ is nonempty for any $p < p_c$. To prove that $\Lambda(l, p)$ is homeomorphic to the standard Cantor set a.s., we only need to show that every point in $\Lambda(l, p)$ is a limit point a.s.. If it is not, there exists a $l$-adic cube contains only one point in $\Lambda(l, p)$. This only happens with zero probability. Thus a.s. $\Lambda(l, p)$ is homeomorphic to the standard Cantor set conditioned on that $\Lambda(l, p)$ is nonempty for any $p < p_c$. 

A metric space is quasisymmetric to the standard Cantor set if it is compact, doubling, uniformly perfect, and uniformly disconnected \cite[Theorem~$15.11$]{Hei01}. The random structure of fractal percolation implies that it is not uniformly perfect and uniformly disconnected a.s.. For more details, see \cite[Corollay~$4.14$]{BE19} and \cite[Theorem~$6$]{CORS17}. Then it is not quasisymmetric to the standard Cantor set.

\subsection{Quasiarcs}\label{Sec:IntroArc}

Quasisymmetric uniformization problems in one dimension rely on Tukia and V\"ais\"al\"a's characterization of \emph{quasiarcs}.  An \emph{arc} in a metric space is the image of an interval (either open, closed or half open half closed) under a homeomorphism, while a \emph{quasiarc} is the image of an interval under a quasisymmetry\footnote{The ``classical'' definition of a quasicircle is a topological circle $\gamma \sub \C$ such that $\gamma = f(\mathbb{S}^1)$ for some quasiconformal mapping $f$ of the Riemann sphere.  Since planar quasiconformal mappings are, in fact, quasisymmetries, the above definition extends the classical one and allows codomains which are general metric spaces.}. Similarly, a \emph{quasiline}, \emph{quasiray} or a \emph{quasicircle} is the image of $\R$, $[0, \infty)$ or $\mathbb{S}^1$ under a quasisymmetry, respectively.

The study of quasiarcs are motivated by a question of Papasoglu: whether the boundary of a Gromov hyperbolic space contains a quasicircle. This question was complete solved by Bonk and Kleiner, who showed that the boundary of a Gromov hyperbolic group contains a quasicircle if and only if the group is not virtually free \cite[Theorem~$5.4$]{Bon06}.

Tukia and V\"ais\"al\"a's characterization says that quasiarcs are precisely those metric spaces $X$ which are \emph{doubling} and of \emph{bounded turning} \cite[Theorem~$4.9$]{TV80}. We recall $X$ is \emph{$B$-doubling} for a constant $B \geq 1$ if every ball of radius  $r$ in $X$ can be covered by $B$ or fewer balls of radius  $r/2$. Meanwhile, $X$ is of \emph{$C$-bounded turning} for a constant $C \geq 1$ if all points $x,y\in X$  can be joined by a curve $\gamma$ such that $\diam(\gamma) \leq Cd(x,y)$.

Are there natural stochastic arcs which are doubling and of bounded turning?  A first reasonable candidate to consider is Brownian motion, a real-valued continuous stochastic field whose stationary and independent increments are centered Gaussians. See \cite{MP10} for more information.  A second potential candidate is the related Schramm-Loewner Evolution SLE$_\kappa$, a one-parameter family of conformally-invariant random curves which describe the scaling limits of many interfaces at criticality in two-dimensional statistical physics.  See \cite{Law05} for more information. Furthermore, one can consider a natural generalization of $\SLE_\kappa$, namely the $\SLE_\kappa(\rho)$ curves, where we further specify additional information of the location of a marked point in the domain or on its boundary. The $\SLE_\kappa(\rho)$ curves are first introduced in \cite{LSW03} and they will arise naturally in two-dimensional statistical physics models with different types of boundary conditions. Our first two results show neither of these contain quasiarcs.

\begin{Theorem}\label{BM}
    The following almost surely holds: for any $n \in \N$
    \begin{enumerate}
    	\item Any arc on the trace of the $n$-dimensional Brownian motion is not a quasiarc for $n > 1$.
    	
    	\item Any arc on the graph of the $n$-dimensional Brownian motion is not a quasiarc.
    \end{enumerate} 
\end{Theorem}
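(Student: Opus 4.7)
By the Tukia--Väisälä characterization cited above, a quasiarc is precisely a doubling arc of bounded turning. Every subset of Euclidean space is doubling, so the plan is to rule out bounded turning: on any arc under consideration, I will exhibit pairs of points whose subarc diameter is much larger than their chord distance.

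The driving mechanism will be a Borel--Cantelli construction at dyadic scales. For a dyadic interval $[a, a + 2^{-k}]$, I define
\begin{equation*}
E_{a,k} = \Bigl\{\sup_{u \in [0, 2^{-k}]}\|B_{a+u} - B_a\| \ge 2^{-k/2},\ \|B_{a+2^{-k}} - B_a\| \le 2^{-k/2}/k\Bigr\}.
\end{equation*}
By Brownian scaling, $P(E_{a,k})$ equals the unit-time probability $P(\sup_{[0,1]}\|W_u\| \ge 1,\ \|W_1\| \le 1/k)$; combining a Gaussian small-ball estimate on $W_1$ with a Markovian decomposition at time $1/2$ (first demand $\|W_{1/2}\| \ge 1$, then condition and use that the Gaussian density of $W_1 - W_{1/2}$ near $-W_{1/2}$ is bounded below on a ball of radius $1/k$) will give $P(E_{a,k}) \ge c_n/k^n$ for some $c_n > 0$. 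Since the $E_{a, k}$ are independent over disjoint dyadic intervals by the Markov property, within any fixed ambient interval $I$ the probability that no $E_{a,k}$ holds at scale $k$ is at most $\exp(-c_n|I|2^k/k^n)$, which is summable in $k$; Borel--Cantelli then gives, almost surely, infinitely many successful pairs $(a_k, k)$.

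\emph{For part (2)}, the map $t \mapsto (t, B_t)$ is a homeomorphism onto the graph, so every arc on the graph is of the form $G_I = \{(u, B_u) : u \in I\}$ for some subinterval $I$. For each successful $(a_k, k)$ with $k$ large, the graph distance between the two endpoints is $\sqrt{2^{-2k} + \|B_{a_k + 2^{-k}} - B_{a_k}\|^2} \le \sqrt{2}\,2^{-k/2}/k$ (the time contribution is dominated once $k^2 \le 2^k$), while the graph diameter on $[a_k, a_k + 2^{-k}]$ is at least $2^{-k/2}$, so the diameter/chord ratio exceeds $k/\sqrt{2} \to \infty$, precluding bounded turning. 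A countable exhaustion over, say, rational-endpoint subintervals upgrades this to a statement simultaneously valid for all arcs $G_I$ almost surely.

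\emph{For part (1)}, when $n \ge 4$ Brownian motion is almost surely simple, so every arc on the trace is of the form $B([t_0, t_1])$ and the same dyadic construction, now without a time coordinate, gives chord distance $\le 2^{-k/2}/k$ and subarc diameter $\ge 2^{-k/2}$, hence ratio $\ge k \to \infty$. For $n \in \{2, 3\}$ the trace has self-intersections, so a general arc need not be globally time-parametrized; I plan to reduce to the time-parametrized case by extracting a short sub-arc of $\gamma$ which coincides with $B(J)$ for a short time interval $J$, using continuity of the arc and of $B$ together with the local branching structure of the trace near any non-endpoint point of $\gamma$. The hardest part of the whole argument is the probability lower bound $P(E_{a,k}) \ge c_n/k^n$, which requires carefully combining a Gaussian small-ball estimate with a Brownian-bridge-style retention of oscillation; a secondary difficulty, for part (1) when $n \in \{2, 3\}$, is the topological argument that produces a time-parametrized sub-arc of a general arc on the trace.
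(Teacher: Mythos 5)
Your approach is essentially the same as the paper's: a Borel--Cantelli argument over disjoint dyadic intervals that produces, at infinitely many scales, a time interval where the Brownian increment is atypically small relative to the running supremum, which then defeats bounded turning and hence (via Tukia--V\"ais\"al\"a) the quasiarc property. The only cosmetic difference is that the paper fixes a constant $a$, shows the bad event $\{\,|\mathbf{B}(\tfrac{i+1}{2^j})-\mathbf{B}(\tfrac{i}{2^j})|\le 2\cdot 2^{-j/2},\ \sup_t|\mathbf{B}(t)-\mathbf{B}(\tfrac{i}{2^j})|\ge a\cdot 2^{-j/2}\,\}$ has $j$-independent positive probability (so $C^{2^j}$ is summable), and then lets $a\to\infty$ at the end, whereas you bake the divergence directly into the event by shrinking the chord to $2^{-k/2}/k$; this costs a factor $k^{-n}$ in the per-interval probability, but since that loss is polynomial against the exponential number $\sim 2^k$ of intervals, the failure probability at scale $k$ is still $\exp(-c|I|2^k/k^n)$ and Borel--Cantelli still applies. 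Your treatment of part (2) and the countable exhaustion over rational subintervals is correct and matches the paper's appeal to the Markov property.

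The one genuine issue is exactly the one you flag: for part (1) with $n\in\{2,3\}$, the trace is not simple, so a general arc $\gamma$ on the trace need not be globally or even locally of the form $\mathbf{B}(J)$, and the subarc of $\gamma$ between two given points need not be the time-parametrized piece $\mathbf{B}([s,t])$ (nor need any time-parametrized piece lie inside $\gamma$). Your proposed reduction, extracting a short time-parametrized subarc of $\gamma$ by a ``local branching structure'' argument, is not actually carried out and is not obviously true: an arc in the trace of planar Brownian motion can, near any of its points, switch between different strands visited at widely separated times, and it is not clear a priori that a connected piece of the arc ever coincides with some $\mathbf{B}(J)$. It is worth noting that the paper's own proof of this case is equally thin; it asserts that $\mathbf{B}([0,1])$ is not bounded turning (which for $n\in\{2,3\}$ does not follow from the lemma alone, since the trace could contain shortcuts between the two dyadic endpoints outside the interval $[\tfrac{i}{2^j},\tfrac{i+1}{2^j}]$) and then invokes the Markov property in one line to pass to arbitrary arcs. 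So you have reproduced the paper's argument including this soft spot; the difference is that you explicitly acknowledge it as an unresolved step, whereas the paper glosses over it.

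Two smaller remarks. In the conditioning step of your lower bound $P(E_{a,k})\ge c_n/k^n$ you should impose a two-sided condition such as $1\le \|W_{1/2}\|\le 2$ rather than only $\|W_{1/2}\|\ge 1$, so that the conditional Gaussian density of $W_1-W_{1/2}$ near $-W_{1/2}$ is uniformly bounded below; otherwise the constant could degenerate. And your statement ``when $n\ge 4$ Brownian motion is almost surely simple'' should be $n\ge 4$ strictly (the Dvoretzky--Erd\H{o}s--Kakutani result), which you do use correctly, but the theorem as stated also covers $n=2,3$, which is precisely the regime where the argument is incomplete.
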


\begin{Theorem}\label{SLE}
    The following almost surely holds: for any $\kappa >0$ and $\rho \in \R$,
    \begin{enumerate}
    \item the chordal $\SLE_\kappa$ and $\SLE_\kappa(\rho)$ curve in $\H$ is not a quasiray.
    
    \item the radial $\SLE_\kappa$ and $\SLE_\kappa(\rho)$ curve in $\D$ is not a quasiarc.

    \item the whole-plane $\SLE_\kappa$ and $\SLE_\kappa(\rho)$ curve from $0$ to $\infty$ is not a quasiray.
    \end{enumerate}
\end{Theorem}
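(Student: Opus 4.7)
The plan is to apply the Tukia--V\"ais\"al\"a characterization: since all the traces under consideration sit inside a planar domain they are automatically doubling, so the substantive condition to fail is \emph{bounded turning}. I would show the latter almost surely fails for each variant.

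\textbf{Reduction among variants.} The laws of chordal, radial, and whole-plane $\SLE_\kappa$, and of their $\SLE_\kappa(\rho)$ versions, are mutually absolutely continuous on any compact sub-arc bounded away from the endpoints and force points; this is a standard consequence of Girsanov applied to the Loewner driver. Combined with Koebe distortion, which preserves the ratio $\diam(\eta([s,t]))/|\eta(s)-\eta(t)|$ up to a universal constant on compactly contained subarcs, failure of bounded turning for chordal $\SLE_\kappa$ in $\H$ transfers to each listed variant. I thus focus on chordal $\SLE_\kappa$ from $0$ to $\infty$.

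\textbf{Non-simple regime $\kappa>4$.} If the trace is not a Jordan arc it certainly cannot be a quasiarc or quasiray. This handles $\kappa\in(4,8)$ (a.s.\ self-touching without crossing) and $\kappa\ge 8$ (space-filling), as well as those $(\kappa,\rho)$ whose $\rho$-variants inherit self-intersections from $\kappa>4$.

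\textbf{Simple regime $\kappa\in(0,4]$.} Here $\eta$ is a.s.\ a Jordan arc and bounded turning is the substantive condition. The goal is: for every $M>0$, a.s.\ there exist random times $s<t$ with
\[
\diam\bigl(\eta([s,t])\bigr)\ \ge\ M\,|\eta(s)-\eta(t)|.
\]
I would first establish a positive-probability \emph{macroscopic hook} event, namely such $s,t\in(0,1]$ in capacity parameterization. The mechanism is continuity of the Loewner trace in its driver: construct a smooth deterministic driver $W_0$ whose Loewner trace over $[0,1]$ exhibits an explicit hook of ratio $\ge 2M$ (e.g.\ two nearly-closing slits), then invoke the Hausdorff continuity of the trace in the driver (Rohde--Schramm, Lind--Marshall--Rohde) to find a $C[0,1]$-neighborhood of $W_0$ on which traces retain an $M$-hook. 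Since Wiener measure charges every such open neighborhood, the hook event has positive probability. Brownian scale invariance of the driver together with the conformal Markov property at a nested sequence of stopping times, each zooming into a fresh geometric piece of $\eta$ that is disjoint (in $\H$) from the previously captured ones, then yields infinitely many approximately independent trials of the hook event at dyadic spatial scales. A Borel--Cantelli argument gives a.s.\ hooks of ratio $\ge M$ at arbitrarily small scales, and intersecting over $M\in\N$ defeats bounded turning.

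\textbf{Main obstacle.} The technical heart is the macroscopic hook estimate for $\kappa\in(0,4]$: exhibiting a single smooth driver whose trace robustly contains an $M$-hook, and quantifying the driver-to-trace continuity strongly enough that the hook survives passage to nearby random drivers. Such continuity is available in the simple regime, so the argument is feasible, but some care is needed near $\kappa=4$, where $\eta$ begins to interact with $\partial\H$; this additional interaction should only make producing hooks easier rather than harder, as near-touches against $\R$ already supply candidate pinch configurations for free.
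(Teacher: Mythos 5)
Your proposal takes a genuinely different route from the paper. The paper's argument is short and uniform in $\kappa$: since the Loewner driver is a scaled Brownian motion, which a.s.\ fails to be $1/2$-H\"older by L\'evy's modulus of continuity, Marshall--Rohde \cite{MR05} gives that the trace is not the image of a segment under a quasiconformal map of $\C$, and Ahlfors' characterization of bounded-turning curves then gives failure of bounded turning. There is no case split at $\kappa=4$ and no explicit geometric construction. The $\SLE_\kappa(\rho)$ variants are handled by the observation that the drivers in~\eqref{Equation: cSLErho} and~\eqref{Equation: rSLErho} are locally mutually absolutely continuous with $B_t$ away from the force point, hence also fail to be $1/2$-H\"older, and whole-plane $\SLE$ is treated as the weak limit of images of radial $\SLE$.

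The central gap in your proposal is exactly where you flag the ``technical heart'': the macroscopic hook estimate for $\kappa\in(0,4]$ is stated as a goal, not proved, and it carries all of the substantive content in the simple regime. You would have to (i) produce a smooth deterministic driver whose trace robustly exhibits an $M$-hook, (ii) establish a quantitative driver-to-trace continuity estimate strong enough that the hook persists on a $C[0,1]$-open set of drivers around it, and (iii) carry out the stopping-time, scaling, and Borel--Cantelli bookkeeping needed to iterate roughly independently at all scales. Each of these is a nontrivial lemma, and your proposal defers all of them; as written the argument does not close for $\kappa\in(0,4]$, which is precisely the range where the theorem has content beyond non-simplicity. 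A minor factual slip: for $\kappa\le 4$ the chordal trace a.s.\ avoids $\R\setminus\{0\}$ (Rohde--Schramm), so the curve does not ``begin to interact with $\partial\H$'' near $\kappa=4$; boundary touching occurs only for $\kappa>4$, a regime you already dispose of by non-simplicity.
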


While these results may not come as a surprise to experts, we believe it is important to include them in the literature for the sake of completeness. Moreover, the proof involves some challenges, and we introduce new ideas to overcome them.

\subsection{Quasiround carpets}\label{Sec:IntroCarpet}

A metric space is a \emph{metric (Sierpi\'nski) carpet} if it is homeomorphic to the standard Sierpi\'nski carpet $\mathbb{S}_{1/3}$, as in Figure \ref{standardcarpet}.  A \emph{round carpet} is a metric carpet in $\C$ where each complimentary component is a (round) disk.  A \emph{quasiround carpet} is the quasisymmetric image of a round carpet.

\begin{figure}[htbp]
	\begin{center}
		\includegraphics[height = 1.5in]{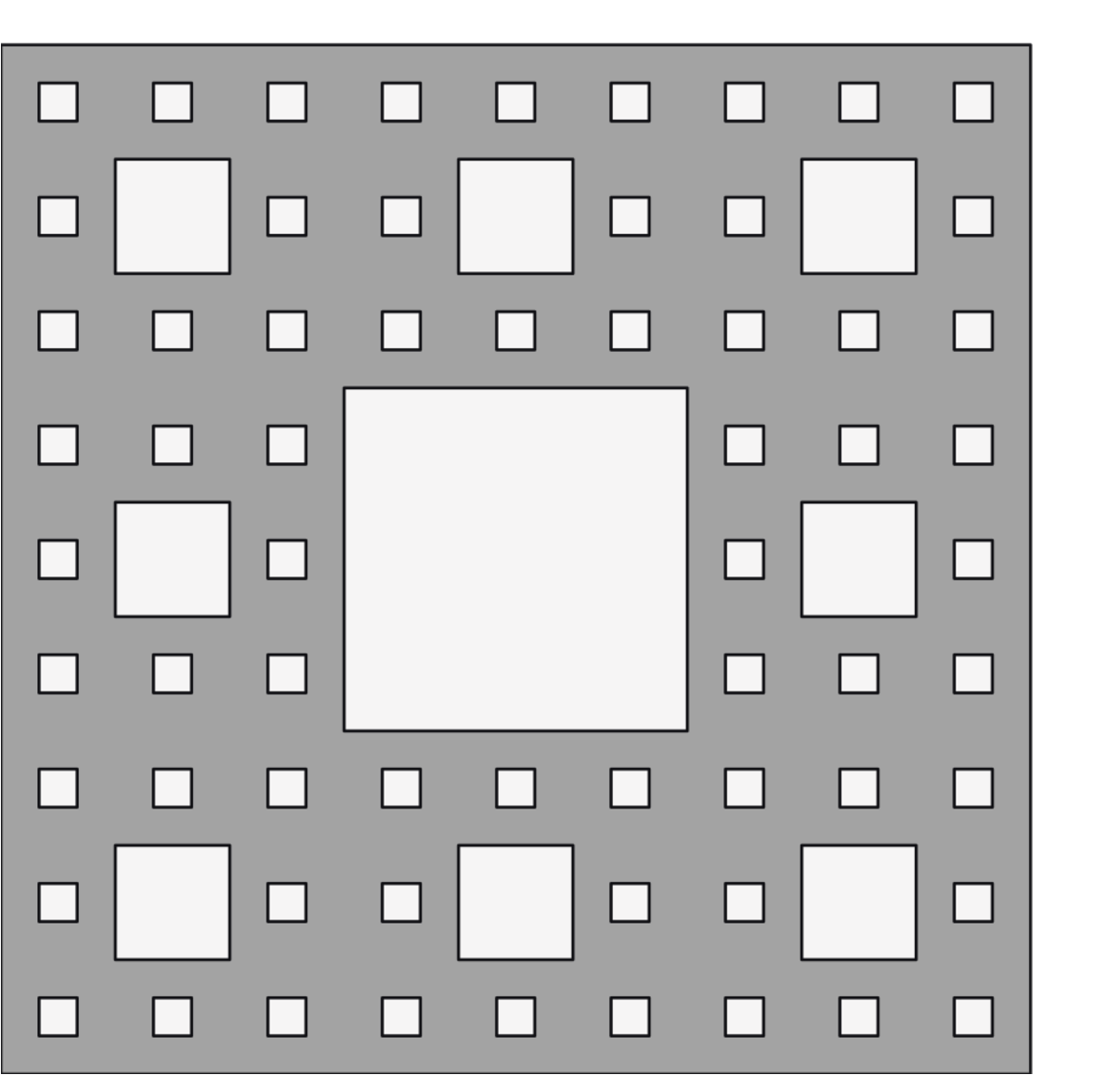}
	\end{center}
	\caption{\small{First three steps in the construction of the  Sierpi\'nski carpet $\mathbb{S}_{1/3}$.}}\label{standardcarpet}
\end{figure}

The study of the quasisymmetric geometry of metric carpets has received considerable attention in recent decades.  See, for in stance, \cite{Bon06, Bon11, BKM09, BM13, Hak22,HL23, Mer10, MW11, MTW13} for the study of carpets in metric geometry, \cite{BLM16} for the study of carpets in dynamics and \cite{Hai15, Kle06, KK00} for the study of carpets in geometric group theory. One underlying motivation is the Kapovich-Kleiner conjecture \cite{KK00} in geometric group theory, a formulation of which says that each metric carpet that is the boundary at infinity of a Gromov hyperbolic group is a quasiround carpet. Bonk provided mild sufficient conditions for a metric carpet $X \sub \C$ to be quasiround, showing that if the boundaries of the complementary components of $X$ are uniformly relatively separated uniform quasicircles, then the carpet is quasiround \cite[Theorem $1.1$]{Bon11}.

A reasonable first \emph{random} candidate to consider in this regard is the conformal loop ensemble $\CLE_\kappa,  \kappa \in (8/3,8)$, introduced by Sheffield and Werner \cite{SW12, She09}. Intuitively, $\CLE_\kappa$ is a collection of random loops in a simply connected domain $\Omega$, where each loop locally looks like a $\SLE_\kappa$ curve. We define the \emph{$\CLE_\kappa$ space}, $\kappa\in(8/3,8)$, as the set of points in $\Omega$ which are not surrounded by any loop. In this paper, we focus on the case $\kappa\in(\frac{8}{3},4]$ where every loop in $\CLE_\kappa$ is simple and does not intersect with each other.

\begin{Theorem}\label{topcarpet}
    For $\kappa \in (\frac{8}{3}, 4]$, the $\CLE_\kappa$ space is a.s. a metric carpet.
\end{Theorem}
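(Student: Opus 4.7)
The plan is to verify the classical topological characterization of the Sierpi\'nski carpet due to Whyburn: a compact set $X \subset \mathbb{S}^2$ is homeomorphic to $\mathbb{S}_{1/3}$ if and only if it is a locally connected continuum with empty interior whose complementary components in $\mathbb{S}^2$ are Jordan domains with pairwise disjoint closures whose diameters tend to zero.

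By conformal invariance of $\CLE_\kappa$ we may assume $\Omega = \D$. Let $X$ denote the $\CLE_\kappa$ space together with $\partial \D$, so that the complementary components of $X$ in $\mathbb{S}^2$ are $\mathbb{S}^2 \setminus \overline{\D}$ together with the interiors $\{U_i\}$ of the individual $\CLE_\kappa$ loops $\{\gamma_i\}$. The verification rests on the following a.s.\ properties of $\CLE_\kappa$ for $\kappa \in (8/3, 4]$, all from \cite{SW12, She09}: (i) each $\gamma_i$ is a Jordan curve; (ii) distinct loops have disjoint closures, and each loop has positive distance from $\partial \D$; (iii) $\bigcup_i \overline{U_i}$ is dense in $\overline{\D}$; and (iv) for every $\epsilon > 0$ only finitely many $\gamma_i$ have diameter exceeding $\epsilon$.

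Granting (i)--(iv), most of Whyburn's conditions follow immediately: $X$ is compact; it has empty interior by (iii); each complementary component is a Jordan domain by (i); closures of complementary components are pairwise disjoint by (ii); and their diameters tend to zero by (iv). Connectedness follows because $\overline{\D}$ is connected and removing a family of open Jordan domains with pairwise disjoint closures preserves connectedness (any two points in $X$ can be joined by concatenating Jordan arcs along the boundaries of the $U_i$ one encounters). The remaining condition is local connectedness. Given $p \in X$ and $\epsilon > 0$, property (iv) allows one to choose $\delta \in (0, \epsilon/3)$ so small that $\overline{B(p, \delta)}$ meets no loop of diameter $\geq \epsilon/3$ except possibly the unique loop $\gamma_{i_0}$ containing $p$ (if any). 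A standard ``bead-space'' argument then shows that $B(p, \delta) \cap X$ is contained in a connected neighborhood of $p$ in $X$ of diameter at most $\epsilon$: any two points in it can be joined by a path that follows the Jordan boundaries of the small loops it encounters, and, if $p \in \gamma_{i_0}$, includes an arc of $\gamma_{i_0}$ through $p$.

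The main obstacle is item (iv), the local finiteness of $\CLE_\kappa$ loops of macroscopic diameter; I would invoke the corresponding result from \cite{SW12}. Items (i)--(iii) follow from the standard branching-$\SLE_\kappa(\kappa-6)$ construction of $\CLE_\kappa$ for $\kappa \in (8/3, 4]$, together with the fact that in this range the gasket has zero Lebesgue measure and the loops are simple and pairwise disjoint.
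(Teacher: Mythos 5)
Your proof is essentially correct but takes a genuinely different route from the paper. The paper invokes a streamlined version of Whyburn's characterization (its Theorem \ref{metriccarpet}, taken from \cite{HL23}): for a simply connected domain $D$ and topological open disks $D_n \subset D$ with pairwise disjoint closures and diameters tending to zero such that $\overline{\bigcup_n D_n} = \overline{D}$, the set $\overline{D} \setminus \bigcup_n D_n$ is a carpet. This form of the theorem already absorbs all the point-set topology (local connectedness, connectedness, empty interior, identifying the complementary components as Jordan domains) that you have to reprove by hand by appealing to the raw Whyburn statement and the ``bead-space'' argument; the paper just checks the three conditions. The substantive probabilistic step in both arguments is the density of $\bigcup_n \overline{U_n}$ in $\overline{\D}$. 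You derive this from the fact that the gasket has zero Lebesgue measure, which is true for $\kappa\in(8/3,4]$ since the gasket's Hausdorff dimension is strictly less than $2$ --- but that is itself a nontrivial input (from \cite{SSW09, SW11, MSW14}, not really from \cite{SW12, She09} as you cite). The paper instead gives a direct and softer argument: via the loop-soup construction, \cite[Proposition 14]{LW04} shows that a.s.\ every ball $B(z,\varepsilon)$ contains a Brownian loop, hence contains a CLE loop by \cite[Proposition 10.2]{SW12}, so the $D_n$ are dense without any dimension estimate. Your route works, but is heavier both on the topological side (more verification by hand) and the probabilistic side (stronger input); the paper's route is preferable on both counts.
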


\noindent Thus we are justified in calling the $\CLE_\kappa$ space the \emph{$\CLE_\kappa$ carpet} for $\kappa \in (\frac{8}{3}, 4]$. However, as in the case of arcs, we have the following negative result, which is consistent with Theorem \ref{SLE}.

\begin{Theorem}\label{QScarpet}
    For $\kappa \in (\frac{8}{3}, 4]$, the $\CLE_\kappa$ space is a.s. not a quasiround carpet.
\end{Theorem}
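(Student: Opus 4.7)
The plan is to reduce Theorem~\ref{QScarpet} to the statement that an individual $\CLE_\kappa$ loop is not a quasicircle, and then to obtain the latter in the spirit of Theorem~\ref{SLE}. Suppose for contradiction that on an event of positive probability there is a quasisymmetry $f\: X \to Y$, where $X$ is the $\CLE_\kappa$ carpet in $\Omega$ and $Y \subset \wC$ is a round carpet. A classical theorem of Whyburn characterizes the peripheral circles of a Sierpi\'nski carpet, i.e.\ the boundaries of complementary components, as precisely the non-separating simple closed curves in the carpet; this is a purely topological invariant, so $f$ carries each $\CLE_\kappa$ loop $\ell \subset X$ onto the boundary $\partial D_\ell$ of a round disk $D_\ell \subset \wC \setminus Y$. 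The restriction $f|_\ell\: \ell \to \partial D_\ell$ is then a quasisymmetry onto a round circle, which is doubling and of bounded turning; since both properties are quantitatively preserved under quasisymmetries, the Tukia--V\"ais\"al\"a characterization \cite[Theorem~$4.9$]{TV80} forces $\ell$ to be a quasicircle.

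It thus suffices to prove that almost surely some, in fact every, $\CLE_\kappa$ loop fails to be a quasicircle. I would establish this by importing the proof of Theorem~\ref{SLE} to the $\CLE_\kappa$ setting via a standard local coupling. Concretely, the conformal Markov / restriction property of $\CLE_\kappa$ together with the Koebe distortion theorem implies that, after a conformal normalization of a small neighborhood of a typical point $z$ on a $\CLE_\kappa$ loop, the law of the loop arc through $z$ is mutually absolutely continuous with the law of a chordal $\SLE_\kappa$ arc, with Radon--Nikodym derivative bounded above and below on the scale at which the comparison is made. The proof of Theorem~\ref{SLE} exhibits, almost surely, pairs of points on an $\SLE_\kappa$ curve whose subarc diameter is arbitrarily large relative to their chordal distance, thereby violating $C$-bounded turning for every $C \geq 1$; these are \emph{local} events, and they transfer to $\ell$ through the absolute continuity above. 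Hence a.s.\ $\ell$ is not of bounded turning, contradicting the conclusion of the previous paragraph.

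The main technical obstacle is this last transfer step. The pathological points of $\SLE_\kappa$ constructed in Theorem~\ref{SLE} live on arbitrarily small scales, so one must verify that the local absolute continuity between $\CLE_\kappa$ and $\SLE_\kappa$ is uniform across a telescope of scales, and then upgrade ``bounded turning fails at some scale $\eps_n \to 0$'' to ``bounded turning fails for every constant $C$'' via a Borel--Cantelli or zero--one argument across a countable family of nested conformal windows, and (if needed) across the countably many $\CLE_\kappa$ loops of $X$. By comparison, the topological reduction in the first paragraph, once Whyburn's characterization of peripheral circles is in hand, is essentially formal, and serves merely to convert the geometric hypothesis ``$X$ is quasiround'' into the loop-by-loop statement ``each $\ell$ is a quasicircle'' that Theorem~\ref{SLE} is designed to refute.
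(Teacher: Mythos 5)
Your reduction in the first paragraph is correct and is essentially the (implicit) argument the paper also uses to pass from Theorem~\ref{Theorem: nonquasiarc} to Theorem~\ref{QScarpet}: a homeomorphism preserves peripheral circles, a quasisymmetry onto a round circle (which is doubling and of bounded turning) forces the source loop to be a quasicircle by Tukia--V\"ais\"al\"a. However, your second step --- showing that a.s.\ no $\CLE_\kappa$ loop is a quasicircle --- takes a genuinely different route from the paper. You propose a \emph{local absolute continuity} argument: couple a piece of a $\CLE_\kappa$ loop with a chordal $\SLE_\kappa$ arc near a typical interior point, then transfer the a.s.\ failure of bounded turning (itself a local event inside a fixed conformal window, by \cite{MR05} applied to the driving Brownian motion) through the Radon--Nikodym derivative, and finally take a countable union over loops. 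The paper instead argues \emph{globally}: it introduces the $\SLE_\kappa$ loop measure $\SLE_\kappa^{\rm loop}$, shows in Proposition~\ref{prop:non-qc} that it charges quasicircles with measure zero (because the whole-plane $\SLE_\kappa(2)$ piece of the loop is a.s.\ not a quasiarc, via Remark~\ref{SLEgeneral}), invokes the recent identity $\SLE_\kappa^{\rm loop} = $ counting measure on $\CLE_\kappa^{\C}$ loops from \cite{ACSW} (Proposition~\ref{prop:equivalence}), and descends to $\CLE_\kappa$ on a bounded domain via the Markov property (Lemma~\ref{lem:markov}) and a conformal-invariance constancy observation. Each approach has a cost: the paper leans on the deep \cite{ACSW} equivalence, which lets the argument avoid small-scale estimates entirely; your approach avoids that machinery but shifts the burden to making the local $\CLE$/$\SLE$ absolute continuity precise and uniform across a nested sequence of windows --- you flag this honestly as the main obstacle and sketch how to close it (Borel--Cantelli across scales and a countable union over loops), but those estimates are where the real work lies and they are not carried out. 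There is also a minor mischaracterization: for $\SLE$ the paper does not directly exhibit pairs of badly-turning points as in the Brownian case (Lemma~\ref{Lemma: maxpoint}); it goes through the Marshall--Rohde criterion on the H\"older exponent of the driving function. That said, the Marshall--Rohde obstruction is equally local and would feed into your coupling just as well, so this does not affect the validity of the plan, only its description.
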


\begin{Remark}
	We essentially prove a result stronger than Theorem \ref{QScarpet}, namely, that a.s. no boundary circle of the $\CLE$ carpet is a quasicircle. See Theorem \ref{Theorem: nonquasiarc}.
\end{Remark}

\subsection{Quasispheres}\label{Sec:IntroSphere}

For the sake of completeness we also briefly overview the case of topological spheres, although our paper makes no new contributions here.  Recall a metric space is a \emph{quasisphere} if it is quasisymmetric to $\mathbb{S}^2$. Similar to the case of metric carpets, quasispheres have elicited substantial attention in recent decades.  See, e.g., \cite{BK02,  Mey10, MW11, Wil08} for the study of spheres in metric geometry and \cite{Bon06, BK05, Can94, Kle06} for the study of spheres in geometric group theory. Analog to the carpet case, there is also a motivation from geometric group theory, where Cannon's famous conjecture states that each sphere that is the boundary at infinity of a Gromov hyperbolic group is a quasisphere \cite{Can94}. Bonk and Kleiner \cite[Theorem~$1.1$]{BK02} proved that an Ahlfors $2$-regular metric sphere is a quasisphere if and only if it is linearly locally contractible. Given this result, they showed in \cite[Theorem~$1.1$]{BK05} that Cannon conjecture holds if the boundary attains its minimal Ahlfors regular conformal dimension .

Are there natural random spheres that are, in fact, quasispheres?  A natural candidate is Liouville quantum gravity $\LQG_\gamma$, $\gamma \in (0,2)$, which is homeomorphic to $\mathbb{S}^2$. LQG was introduced by \cite{DS11} and has been widely studied in the last decade \cite{DDDF20, GM21}. As in our results above, a negative answer for the quasisymmetric uniformization question for $\LQG_\gamma$ is given by \cite{Tor21} and \cite{Hug24}, who show that a.s. $\LQG_\gamma$ for $\gamma \in (0,2)$ is not a doubling metric space. Since the doubling property is a quasisymmetric invariant \cite[Theorem~$10.18$]{Hei01} and $\mathbb{S}^2$ is doubling, $\LQG_\gamma$ is a.s. not a quasisphere.

 \subsection{Discussion} The consistent theme in all our examples is that many natural random fractals fall outside the realm of quasisymmetric uniformization to simple canonical spaces. This may lead to an intuition that the quasi-geometrical world is disjoint with the stochastic world. However, we believe that there are some deeper idea behind them and these two worlds can be interconnected through an exploration of the renowned \emph{Sullivan dictionary}. The Sullivan dictionary establishes a bridge for translating objects, theorems, and conjectures between Kleinian groups and dynamics of holomorphic maps. Initially observed by Fatou during the early days of complex dynamics, it was rediscovered and formalized by Sullivan in the 1980s \cite{Sul83, Sul85a, Sul85b}. It has been extended to a wider range of analogies between geometry and dynamics. In some cases, very similar proofs can be given of related results in these two fields. More commonly, the dictionary suggests loose analogies which motivate research in each area. Such a machinery is strongly effective, leading to the resolution of several notable conjectures. The following table provides a quick glance of the dictionary:
 \begin{small}
 \[
   	\begin{array}{|c|c|}
			\hline
			\textnormal{\textbf{Geometry}} & \textnormal{\textbf{Dynamics}}
			\\ \hline
			\textnormal{A finitely generated Kleinian group} & \textnormal{A rational map}
			\\ \hline
			\textnormal{convex-cocompact Kleinian groups} & \textnormal{rational maps exhibiting expansion properties}
			\\ \hline
			\textnormal{The domain of discontinuity } & \textnormal{Fatou set}
			\\ \hline
			\textnormal{Limit set} & \textnormal{Julia set}
			\\ \hline
			\textnormal{Fixed points} & \textnormal{Preperiodic points}
			\\ \hline
			\textnormal{Patterson-Sullivan Measures} & \textnormal{Sullivan's conformal measures}
			\\ \hline
			\textnormal{Ahlfors Finiteness Theorem} & \textnormal{Sullivan's no wandering domains theorem}
			\\ \hline
			\textnormal{Bers Area Theorem} & \textnormal{Shishikura's bound on periodic orbits}
			\\ \hline
			\textnormal{Cannon Conjecture} & \begin{split}
				\textnormal{Thurston’s Characterization theorem for}
				\\
				\textnormal{postcritically finite rational maps}
			\end{split}
			\\ \hline
			\textnormal{Mostow rigidity theorem} &
			\begin{split}
				\textnormal{Thurston’s Uniquness theorem for}
				\\
				\textnormal{postcritically finite rational maps}
			\end{split}
			\\ \hline
			\textnormal{The ending lamination theorem} & \textnormal{The Mandelbrot set locally connected conjecture}
			\\ \hline
		\end{array}
 \]
 \end{small}
 
In recent years, there has been growing interest in extending this dictionary to include a third column--\textbf{Statistic Geometry}. Intuitively, the metric spaces generated by probabilistic phenomena often exhibit a random ``self-similar'' property, which shares the common philosophy with the boundaries of hyperbolic groups or Julia sets. Additionally, scaling limits and the Markov property also appears in an analogous sense within both geometric group theory and complex dynamics. 
 
Our paper explores the probabilistic aspect of this framework by demonstrating that probabilistic objects may not align well with ``regular'' metric geometry. Thus to further develop this analogy, it is essential to first identify the canonical uniformization space and the canonical isomorphism within the context of random structures.

\subsection{Organization}
This paper is organized as follows. Section \ref{Preliminaries} gives basic definitions and notations and then briefly introduces Brownian motion, $\SLE_\kappa$, and $\CLE_\kappa$. Section \ref{Section: QGRA} is devoted to proving Theorems \ref{BM} and \ref{SLE}, and we prove Theorems \ref{topcarpet} and \ref{QScarpet} in Section \ref{Sec:Carpets}. We record some observations and formulate
several open problems in Section \ref{sec:FQ}.

\subsection*{Acknowledgments}
The authors would like to thank Ilia Binder for his valuable insights in Section \ref{Section: QGRA}. Hrant Hakobyan and Ville Suomala provided helpful suggestions on fractal percolation. The authors also greatly appreciate the many suggestions from Xinyi Li.
\section{Preliminaries}\label{Preliminaries}

As usual, $\C$ is the complex plane equipped with Euclidean metric, $\H$ is the open upper half plane of $\C$, $\R$ is the real line equipped with Euclidean metric, $\Q$ is the rational numbers and $\N$ is the positive integers. 

For a topological space $X$ with  $A \subseteq X$, we write $\overline{A}$, $A^{\mathrm{o}}$ and $\partial A$ for the topological closure, interior and boundary of $A$, respectively. 

Let $E$ be a subset of a metric space $(X, d)$. The diameter $\diam(E)$ of $E$ has the usual definition, as follows:
\begin{align*}
	\operatorname{diam}(E) &\=\sup\{d(x,y) : x,y \in E\}.
\end{align*}

Quasisymmetric mapping introduced in Section \ref{Section: Introduction} is the main object studied in this paper. Additionally, we also introduce quasiconformal mappings, which is closely related to quasisymmetries.

Let $f: X \to Y$ be a homeomorphism between two metric spaces $(X, d_X)$ and $(Y, d_Y)$. For a point $x \in X$ and $r > 0$, we define the \emph{linear dilatation of $f$ at $x$} as
\begin{align}
    H_f(x)=\limsup_{r \to 0}\frac{\sup_{y}  \{ d_Y(f(x),f(y)) \, | \, d_X(x,y)\leq r\}}{\inf_{y}   \{ d_Y(f(x),f(y)) \, | \, d_X(x,y)\geq r\}}.
\end{align}

We say that a homeomorphism $f: X\to Y$ is \emph{$H$-quasiconformal} if
\begin{align*}
  \sup_{x \in X}H_f(x) \leq H
\end{align*}
for some $1\leq H <\infty$. A map is quasiconformal if it is $H$-quasiconformal for some $H$. Remark that any conformal map is a $1$-quasiconformal mapping.

Given a quasiconformal map, we may ask if it is a quasisymmetry. The following statement shows an assertion of it. The proof directly follows \cite[Theorem~$11.14$]{Hei01} and \cite[Theorem~$2.23$]{TV80}.

\begin{Theorem}\label{compactQS}
    Let $f: U \to V$ be a $H$-quasiconformal map between two domains in $\R^n$ and $A \sub U$ be a compact subset. Then $f|_A$ is an $\eta$-quasisymmetry, where $\eta$ depends on $H, n$ and $A$.
\end{Theorem}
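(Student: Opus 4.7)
The plan is to combine Heinonen's local quasisymmetry theorem for Euclidean quasiconformal maps with a compactness argument, then upgrade the resulting weak estimate to a full $\eta$-quasisymmetry via Tukia--V\"ais\"al\"a. The first ingredient is that in $\R^n$ with $n \geq 2$, the linear dilatation bound $H_f \leq H$ upgrades to a local \emph{weak} quasisymmetry estimate: by \cite[Theorem~11.14]{Hei01}, there is a constant $H_0 = H_0(H, n)$ such that for each $x \in U$ one finds a radius $r(x) > 0$ with $\overline{B(x, 2r(x))} \sub U$ on which $f$ satisfies $|f(x) - f(y)| \leq H_0 |f(x) - f(z)|$ whenever $y, z \in B(x, r(x))$ and $|x - y| \leq |x - z|$. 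This rests on the standard modulus-of-ring-domain estimates in $\R^n$, which produce a purely dimensional constant.

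Next, I would use compactness of $A$ to globalize. Cover $A$ by finitely many balls $B(x_i, r(x_i)/2)$ and set $r_0 \= \min_i r(x_i)/2 > 0$. Any triple $x, y, z \in A$ with $\max(|x-y|, |x-z|) \leq r_0$ lies in a common $B(x_i, r(x_i))$, so the local estimate gives the desired weak bound with constant $H_0$. For the remaining triples, where at least one of $|x - y|, |x - z|$ exceeds $r_0$, a direct compactness argument bounds $|f(x) - f(y)| / |f(x) - f(z)|$ above by a constant depending only on $A$, $H$, and $n$, using the continuity of $f$ and $f^{-1}$ together with $\diam(A)$ and $\diam f(A)$. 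Combining the two regimes exhibits $f|_A$ as a weakly quasisymmetric map with constant depending only on $H, n, A$.

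The final step is the passage from weak to genuine $\eta$-quasisymmetry, which is \cite[Theorem~2.23]{TV80}: since $A \sub \R^n$ is a doubling metric space with doubling constant depending only on $n$, the weak estimate implies $\eta$-quasisymmetry, with $\eta$ constructed explicitly from $H_0$ and the doubling constant. The main obstacle is the bookkeeping required to ensure $\eta$ depends only on $H, n, A$ and not on $f$ itself. This reduces to two observations: first, the local constant $H_0$ from Heinonen 11.14 is purely a function of $H$ and $n$; second, the large-scale quantities $\diam f(A)$ and the lower bound on $|f(x) - f(z)|$ for $|x-z| \geq r_0$ can be controlled in terms of $A$ alone via the classical oscillation and equicontinuity estimates for quasiconformal families in $\R^n$, after normalizing by affine maps if necessary.
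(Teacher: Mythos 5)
Your proposal is correct and matches the paper's route: the paper's entire proof of this statement is the one-line citation to Heinonen's Theorem~11.14 and Tukia--V\"ais\"al\"a's Theorem~2.23, and you simply unpack the compactness argument that glues the local weak-quasisymmetry estimate from the former into the hypotheses of the latter. One small caveat worth recording is that the Tukia--V\"ais\"al\"a upgrade from weak to genuine quasisymmetry requires the domain $A$ to be connected, a hypothesis left implicit in the theorem's statement but satisfied in all of the paper's applications.
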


\subsection{Brownian motion and Schramm-Loewner Evolution}

As mentioned in the introduction, \emph{one-dimensional Brownian motion} is a continuous stochastic process with stationary increments that are independent and Gaussian. We denote one-dimensional Brownian motion defined on the probability space $\left( \Omega, \mathcal{F}, \mathbb{P}\right)$ by $B(t)$. See \cite[Chapters $1$ and $2$]{MP10} for precise definitions and basic properties.  Collecting $n$ independent one-dimensional Brownian motions $\B(t) = (B_1(t), \ldots, B_n(t))$ yields a \emph{$n$-dimensional Brownian motion}. A Brownian motion is called a \emph{standard Brownian motion} if it starts from the origin point.

Brownian motion possesses the following \emph{Markov property} \cite[Theorem~$2.3$]{MP10}. Let $\B(t)$ be a $n$-dimensional Brownian motion. Assume that $s > 0$, then the process
\begin{equation}\label{Markov}
    \{\B(t+s) - \B(s): t > 0\}
\end{equation}
is a Brownian motion started in the origin and it is independent of $\B(t)$.

In this paper, we consider several natural variants of $\SLE_\kappa$, namely the \emph{chordal} SLE, the \emph{radial} SLE, the \emph{whole-plane} SLE and the \emph{$\SLE_\kappa(\rho)$} processes. These variants naturally appear when considering different settings of two-dimensional critical statistical physics models. Excellent references on these $\SLE_\kappa$'s include \cite{Law05,Kem17} and \cite[Appendix A and B]{BP23}. For whole-plane $\SLE_\kappa$, see \cite{MS17}.

Chordal \emph{Schramm-Loewner Evolution} $\SLE_\kappa$, $\kappa \in [0,\infty)$, is a one-parameter family of conformally-invariant random planar curves connecting two prime ends in a simply connected domain $\Omega \subsetneq \C$. It was introduced by Schramm in \cite{Sch00} and has seen significant development over the past decade. It is a random growth process defined by Loewner equation with a driving parameter given by a standard one-dimensional  Brownian motion running with speed $\kappa$. Specifically, the \emph{chordal Schramm-Loewner evolution} with parameter $\kappa \geq 0$ in $\H$ is the solution to the following equation.
\begin{equation} \label{chordalSLE}
	\partial_t g_t(z) = \frac{2}{g_t(z) - \xi_t} , \
	g_0(z) = z , 
\end{equation}
where $\xi_t = \sqrt{\kappa}B_t$ and $B_t$ is the standard one-dimensional Brownian motion. Intuitively, SLE can be viewed as the Brownian motion on the space of conformal maps. The  \emph{$\SLE_\kappa$ curve} $\gamma$ is defined as follows:
\[
\gamma(t) = \lim_{z \to 0}g_t^{-1}(z+ \sqrt{\kappa} B_t)
\]
where $z$ tends to $0$ within $\H$. If the limit does not exist, let $\gamma(t)$ denote the set of all limit points.Notably, $\SLE_\kappa$ has the following two properties:
\begin{enumerate}

    \item \textbf{Conformal Invariance}. If $\gamma$ is a $\SLE_\kappa$ curve and $\phi$ is a conformal map of the upper half plane that fix $0$ and $\infty$, then $\phi(\gamma)$ is also a $\SLE_\kappa$ curve.
    
    \item \textbf{Domain Markov Property}. If $\gamma$ is a $\SLE_\kappa$, $T$ is a finite stopping time and $\phi$ is a conformal map from $\mathbb{H} \setminus \gamma(0, T)$ to  $\mathbb{H}$ that maps $\gamma(T)$ to $0$ and fix $\infty$ . Then $\phi(\gamma)$ is a $\SLE_\kappa$ curve.
\end{enumerate}
For general simply connected domain $D \sub \C$ and $a,b\in\partial D$, we define the chordal $\SLE_\kappa$ from $a$ to $b$ to be the law of the chordal $\SLE_\kappa$ on $\H$ from $0$ to $\infty$ under a under the conformal map map $\phi:\H\to D$ with $0,\infty$ mapped to $p,q$. Note that this definition does not rely on the choices of $\phi$ by the above conformal invariance of the chordal $\SLE_\kappa$ on $\H$.

Compared with chordal $\SLE_\kappa$, the radial $\SLE_\kappa$ in a domain describe processes that start from the boundary but are targeted at an interior point of the domain. Fix the unit disk $\D$ and choose starting and targeting points as $1$ and $0$, respectively. The \emph{radial Schramm-Loewner evolution} with parameter $\kappa \geq 0$ in $\D$ is the solution of the following radial Loewner equation
\begin{equation*}
	\partial_tg_t(z)=g_t(z)\frac{\xi_t+g_t(z)}{\xi_t-g_t(z)},\quad g_0(z)=z
\end{equation*}
where $\xi_t=e^{i\sqrt{\kappa}B_t}$ and $B_t$ is the standard one-dimensional Brownian motion. The \emph{radial $\SLE_\kappa$ curve} is defined analogously to the chordal case.
In general, for a simply connected domain $D$ and $p \in \partial D$, $q\in D$, the radial $\SLE_\kappa$ on $D$ from $p$ to $q$ is defined as the image of the radial $\SLE_\kappa$ on $\D$ from $1$ to $0$ under the conformal map $\phi$ from $\D$ to $D$ such that $1,0$ are mapped to $p,q$.

The \emph{Whole-plane} $\SLE$ is another variant of $\SLE$ which can be thought of as a bi-infinite time version of radial $\SLE$. It describes a random growth process $K_t$ where, for each $t \in \R$, $K_t \subseteq \C$ is compact with $\C_t = \C \setminus K_t$ simply connected (viewed as a subset of the Riemann sphere).  For each $t$, we let $g_t \colon \C_t \to \C \setminus \D$ be the unique conformal transformation with $g_t(\infty) = \infty$ and $g_t'(\infty) > 0$.  Specifically, the \emph{whole-plane} $\SLE_\kappa$ from $0$ to $\infty$ is the solution of
\begin{equation}
	\label{eqn::whole_plane_loewner}
	\partial_t g_t = g_t(z) \frac{\xi_t + g_t(z)}{\xi_t - g_t(z)}.
\end{equation}
where $\xi_t=e^{i\sqrt{\kappa}B_t}$ and $B_t$ is the standard one-dimensional Brownian motion. 

For any $p, q \in \C$, we also define whole-plane $\SLE_\kappa$ from $p$ to $q$ to be the image of whole-plane $\SLE_\kappa$ from $0$ to $\infty$ under a M\"obius transform $\phi$ mapping $0,\infty$ to $p,q$. Note that this is well-defined since the law of the whole-plane $\SLE_\kappa$ from $0$ to $\infty$ is scaling invariant.

We also present an alternative construction of the whole-plane SLE from $0$ to $\infty$. Let $\mu_{a}$ be the distribution of the radial $\SLE_\kappa$ in $a \D$ connecting $a$ and $0$. We define $\nu_{a }$ as the distribution of random curves as the image of the samples of $\mu_{a }$ under the map $1/az$. Then the distribution $\nu$ of the whole-plane $\SLE_\kappa$ is the weak limit of $\nu_{a}$ as $a \to 0$. For more details, we refer the reader to \cite{MS17, Zha15}.

For each SLE variant mentioned above, we can construct its corresponding \emph{$\SLE_\kappa(\rho)$ process}, $\rho \in \R$. Specifically, the chordal $\SLE_\kappa(\rho)$ process on $\H$ is defined as the random process generated by the associated Loewner equation above with a driving function $W_t$ satisfying
\begin{equation}\label{Equation: cSLErho}
d W_t = \sqrt{\kappa} d B_t + \frac{\rho}{W_t - V_t}  \qquad d V_t = \frac{2}{V_t - W_t} dt.
\end{equation}
Refer to \cite[Section~$8$]{LSW03} for background and motivation. For radial SLE on $\D$ and whole-plane $\SLE_\kappa(\rho)$ from $0$ to  $\infty$, the driving function is defined as
\begin{equation}\label{Equation: rSLErho}
d W_t = \sqrt{\kappa} d B_t + \frac{\rho}{2} \cot\left( \frac{W_t - V_t}{2} \right) dt\qquad d V_t = -\cot\left( \frac{W_t - V_t}{2} \right) dt.
\end{equation}
We refer the reader to see \cite[Section~$2$]{Wu15} and \cite[Section~$2$]{MS17} for detailed constructions. The corresponding $\SLE_\kappa(\rho)$ from $p$ to $q$ for any $p, q \in \C$ is defined analogously as above.

The significance of SLE lies in its ability to describe the scaling limits of many interfaces at criticality in two-dimensional statistical physics. For example, under appropriate boundary conditions and lattice configurations, the interfaces in critical Ising and critical percolation converge to $\SLE_3$ and $\SLE_6$, respectively. The parameter $\kappa$ describes how rough the fractal curve $\gamma_\kappa$ is. When $\kappa =0$, $\gamma_0$ is the deterministic hyperbolic geodesic between the two prime ends;  it is simple when $\kappa \leq 4$ and non-simple otherwise, and is space-filling when $\kappa \geq 8$.

\subsection{The conformal loop ensemble}\label{Sec:CLE}

There is a loop version of $\rm SLE_\kappa$, called the \emph{Conformal Loop Ensemble} CLE$_\kappa$, $\kappa\in(8/3,8)$, which is the natural candidate for the full scaling limit of the collection of all interfaces at criticality of many two-dimensional statistical physics models. For instance, critical Ising model corresponds to $\CLE_3$, and critical percolation to $\CLE_6$. As $\kappa$ varies there are two regimes. The \emph{dilute case}, defined in \cite{SW12} is when each loop in $\rm CLE_\kappa$ is simple and does not intersect other loops which occurs for $\kappa\in(8/3,4]$. When $\kappa\in(4,8)$ one has the {\it dense case}, which is defined in \cite{She09} and the $\CLE_\kappa$ loops which touch both themselves and others. Since we can only naturally obtain a topological carpet in the dilute case, in this paper we focus exclusively on $\kappa\in(8/3,4]$.

Given a simply connected domain $D \subsetneq \C$, a \emph{loop configuration} is a countable collection of disjoint simple loops where, for each $\varepsilon>0$, the number of loops with diameter larger than $\varepsilon$ is finite. Sheffield and Werner \cite[Theorem 1.1]{SW12} showed that, for $\kappa \in (8/3, 4]$, $\rm CLE_\kappa$ is the unique one-parameter family of probability distributions on loop configurations of non-nested loops in $D$ possessing the following two properties:

\begin{enumerate}
\item \textbf{Domain Markov Property}. For a simply connected subdomain $U\sub D \sub \mathbb{C}$, the conditional law of the loops in $U$, given those loops not contained in $U$, is $\rm CLE_\kappa$ in the (connected components of the) remaining domain.

\item \textbf{Conformal Invariance}. If $\phi:D \to D'$ is a conformal map and $\Gamma$ is a sample of $\mathrm{CLE}_\kappa$ in $D$, then the law of $\phi(\Gamma)$ is $\mathrm{CLE}_\kappa$ in $D'$.
\end{enumerate}
We use the terminology \emph{$\CLE_\kappa$ configuration} to emphasize the collection of random loops in $\CLE_\kappa$.

For $\kappa\in(\frac{8}{3},4]$, $\CLE_\kappa$ may be constructed via \emph{Brownian loop soup}, as in \cite[Section 8]{SW12}, and as we explain in the following. First, we define the \emph{Brownian loop measure} $\mu_D$ as follows: For any measurable subset $A$ of the space of loops on $D$,
\begin{equation*}
\mu_D(A) :=\int_D\int_0^\infty \frac{1}{2\pi t^2}\mathbb{P}^t_z(A) \,dt\, dz,
\end{equation*}
where $\mathbb{P}^t_z(\cdot)$ denotes the law of the 2-dimensional Brownian bridge in $D$ from $z$ to $z$ with time length $t$, i.e., ${\B}_s-\frac{s}{t}{\B}_t, 0 \leq s\leq t$, where $\bf B$ is a 2-dimensional Brownian motion. The Brownian loop measure is a $\sigma$-finite measure on the space of loops on $D$. The Brownian loop soup $\mathcal{L}$ on $D$ with intensity $c$ is then the Poisson point process with intensity measure $c\mu_D$. Roughly speaking, it is a measure defined on the spaces of a countable family of loops on $D$. See \cite{LW04} for additional background.

We say two Brownian loops $l$ and $l'$ are in the same cluster if there is a sequence of loops $l_0=l,l_1,....,l_n=l'$ such that $l_i\cap l_{i+1}\neq\emptyset$, $0\le i\le n-1$. According to \cite[Proposition 10.2]{SW12}, the collection of boundaries of the outermost clusters of a Brownian loop soup with intensity $c\mu_D, c \in (0,1]$, has the law of $\CLE_\kappa$ on $D$ for $c=\frac{(3\kappa-8)(6-\kappa)}{2\kappa}$.

\section{Quasisymmetric geometry of random arcs}\label{Section: QGRA}

In this section we prove Theorems \ref{BM} and \ref{SLE}, showing that the trace of Brownian motion, the graph of Brownian motion and various $\SLE_\kappa$ variants are a.s. not quasiarcs.

We begin with Brownian motion. Recall from the introduction the Tukia-V\"ais\"al\"a result that quasiarcs are precisely the metric spaces which are doubling and of bounding turning. Since the Brownian motions and Brownian graphs reside in Euclidean spaces, which are doubling, to show they are a.s. not a quasiarc we need to show they are a.s. not of bounded turning.  The argument begins with the following lemma.

\begin{Lemma}\label{Lemma: maxpoint}
    Let $\B(t)$ be a $n$-dimensional Brownian motion.  Fix $ a > 0$. Then a.s. for sufficiently large $j$, there exists an interval $[\frac{i}{2^j}, \frac{i+1}{2^j}]$ for some $1\le i\le 2^j-1$ satisfying the following conditions:
    \begin{enumerate}
        \item $\left|{\bf B}(\frac{i+1}{2^j}) - {\bf B}(\frac{i}{2^j})\right| \leq \frac{2}{\sqrt{2}^j}$
        
        \item $\max_{\frac{i}{2^j} \leq t \leq \frac{i+1}{2^j}}\left|{\bf B}(t)-{\bf B}(\frac{i}{2^j})\right| \geq \frac{a}{\sqrt{2}^j}$
    \end{enumerate}
\end{Lemma}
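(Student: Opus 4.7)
The plan is a standard Borel--Cantelli argument combined with Brownian scaling and the independence of increments over disjoint dyadic intervals.

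For each $j \in \N$ and $1 \le i \le 2^j-1$, let $X_i^j$ denote the event that the interval $[\tfrac{i}{2^j},\tfrac{i+1}{2^j}]$ satisfies both conditions (1) and (2). The Brownian scaling relation
\[
\W_i(s) \= 2^{j/2}\Bigl(\B\Bigl(\tfrac{i}{2^j}+\tfrac{s}{2^j}\Bigr)-\B\Bigl(\tfrac{i}{2^j}\Bigr)\Bigr), \qquad s \in [0,1],
\]
shows that $\W_i$ is a standard $n$-dimensional Brownian motion on $[0,1]$ and $X_i^j$ translates into the single event
\[
E \= \Bigl\{|\W_i(1)|\le 2,\ \max_{0\le s\le 1}|\W_i(s)|\ge a\Bigr\}.
\]
Hence $\P(X_i^j)=p$ where $p\=\P(E)$ does not depend on $i$ or $j$. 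The positivity $p>0$ follows from the fact that Brownian motion has full support in $C([0,1],\R^n)$: a piecewise linear path that first leaves the ball of radius $a$ and then returns to the ball of radius $1$ at time $1$ lies in the open set $E$, so $E$ has positive probability.

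The next step is to exploit independence. Since the events $\{X_i^j\}_{i=1}^{2^j-1}$ depend only on increments of $\B$ over the pairwise disjoint intervals $[\tfrac{i}{2^j},\tfrac{i+1}{2^j}]$, they are mutually independent by the independent increment property of Brownian motion. Consequently, letting $A_j\=\bigcap_{i=1}^{2^j-1}(X_i^j)^c$ denote the event that no interval at scale $j$ works,
\[
\P(A_j) \;=\; (1-p)^{\,2^j-1}.
\]
Since $0<p<1$, this is summable: $\sum_{j\ge 1}\P(A_j)<\infty$. By the Borel--Cantelli lemma, a.s.\ $A_j$ occurs for only finitely many $j$, i.e.\ a.s.\ for all sufficiently large $j$ there exists $1\le i\le 2^j-1$ with $X_i^j$ occurring. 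This is exactly the conclusion of the lemma.

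There is no genuine obstacle here; the only point requiring care is the positivity of $p$, which is not completely automatic because the two conditions pull the endpoint in opposite directions (small final displacement but large intermediate excursion). The support argument above handles this uniformly in $a>0$, and the strong independence across disjoint dyadic intervals makes the first Borel--Cantelli lemma applicable without further work.
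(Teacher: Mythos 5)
Your argument is correct and follows essentially the same route as the paper: scale each dyadic block to a standard Brownian motion on $[0,1]$, observe the per-block success probability $p$ is a positive constant independent of $i,j$, use independence of increments over disjoint dyadic intervals to get $\P(\text{no block succeeds at scale }j)=(1-p)^{2^j-1}$, and apply Borel--Cantelli. The only cosmetic difference is in establishing $p>0$: the paper conditions on $B(\tfrac12)\ge a$ and bounds the conditional probability of returning near the origin, while you invoke full support of Wiener measure via a piecewise-linear test path (note your $E$ is closed, not open, but the test path lies in its interior, so the argument is still valid).
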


\begin{proof}
For each $i, j \geq 1$ and $0\le i\le 2^j-1$, let $E_{i,j}$ be the event that
\[
\left|{\bf B}(\frac{i+1}{2^j})-{\bf B}(\frac{i}{2^j})\right|\le \frac{2}{\sqrt{2^j}}
\]
and $F_{i,j}$ be the event that
\[
\max_{\frac{i}{2^j} \leq t \leq \frac{i+1}{2^j}}\left|{\bf B}(t)-{\bf B}(\frac{i}{2^j})\right| \geq \frac{a}{\sqrt{2}^j}.
\] 

We claim that  there exists a constant $C \in (0,1)$ depending on $a$ such that for each $i, j \in \N$ and $0\le i\le 2^j-1$, 
\[
\P(E_{i,j} \cap F_{i,j}) = 1-C.
\]

It follows the Markov property \ref{Markov} and the scaling property of Brownian motion \cite[Lemma~$1.7$]{MP10} that for each $i,j \in \N$ and $0\le i\le 2^j-1$,
\[
\P(E_{i,j} \cap F_{i,j}) = \P\left( |\B(1)| \leq 2,\ \max_{0\le t\le1}|{\bf B}(t)|\ge a \right ).
\]
Thus it is sufficient to prove that $\P(E_{i,j} \cap F_{i,j}) > 0$.

\begin{align*}
		\P(E_{i,j} \cap F_{i,j}) & = \P\left( |\B(1)| \leq 2,\ \max_{0\le t\le1}|{\bf B}(t)|\ge a \right ) 
		\\
		& \geq \P\left( B\left(\frac{1}{2}\right) \geq a  \right) \P\left(   |B(1)|  \leq 2   \ \Big|  \ B\left(\frac{1}{2}\right) \geq a  \right) 
		\\
		& > C_1
\end{align*}
for some $C_1 > 0$. This finishes the proof of the claim.

Since $\P\left( \cap_{i=0}^{2^j-1} (E_{i,j} \cap F_{i,j})^c \right) = C^{2^j}$, we have
\[
\sum_{j=1}^\infty \P\left( \cap_{i=0}^{2^j-1} (E_{i,j} \cap F_{i,j})^c \right)  < \infty.
\]

We conclude that for sufficiently large $j$, $\cup_{i=0}^{2^j-1} (E_{i,j} \cap F_{i,j})$ occurs by Borel-Cantelli lemma. Namely, a.s. for sufficiently large $j$, there is some $0\le i\le 2^{j-1}$ such that $E_{i,j} \cap F_{i,j}$ occurs. This finishes the proof.
\end{proof}

We are now ready to prove Theorem \ref{BM}.

\begin{proof}[Proof of Theorem~\ref{BM}]
    Fix $n \in \N$. Let $\B(t)$ be a $n$-dimensional Brownian motion. Fix $a > 0$ and $\frac{i}{2^j}, \frac{i+1}{2^j}$ be the corresponding time in Lemma \ref{Lemma: maxpoint}. Then
    \[
    \frac{\diam\left(\B\left( \left[ \frac{i}{2^j}, \frac{i+1}{2^j} \right] \right)  \right)}{ \left|\B(\frac{i+1}{2^j}) - \B(\frac{i}{2^j})\right| } \geq \frac{a}{\sqrt{2}^j} \Big/ \frac{2}{\sqrt{2}^j} = \frac{a}{2}.
    \]
    Letting $a \to \infty$, we have a sequence of pairs of points on the trace of $\B$ such that distance between them is much smaller than the diameter of the arc between them. Thus $\B([0,1])$ is a.s. not bounded turning and thus a.s. not a quasiarc by \cite[Theorem~$4.9$]{TV80}.
    
    It then follows the Markov property \ref{Markov} that any a.s. arc on the trace of $\B$ is not a quasiarc.
    
    We now turn to the graph of $\B$. Similarly, fix $a > 0$ and let the corresponding time $\frac{i}{2^j}, \frac{i+1}{2^j}$ in Lemma \ref{Lemma: maxpoint}. Let $\gamma(t) = (t, \B(t)) \in \R^n$. Then for sufficiently large $j$,
    \[
    \frac{\diam\left(\gamma\left( \left[ \frac{i}{2^j}, \frac{i+1}{2^j} \right] \right)  \right)}{ |\gamma(\frac{i+1}{2^j}) - \gamma(\frac{i}{2^j})| } \geq \frac{a}{\sqrt{2}^j} \Big/ \left( \frac{2}{\sqrt{2}^j} + \frac{1}{2^j} \right) \geq \frac{a}{3}.
    \]
    
    Letting $a \to \infty$, we have a sequence of pairs of points on $\gamma$ such that distance between them is much smaller than the diameter of the arc between them. Then $\gamma([0,1])$ is a.s. not bounded turning and thus a.s. not a quasiarc \cite[Theorem~$4.9$]{TV80}.
    
    It then follows the Markov property \ref{Markov} that a.s. any arc on the graph of $\B$ is not a quasiarc.
\end{proof}

Turning to SLE, the proof of Theorem~\ref{SLE} relies on a different method. Recall that $\SLE$ is a random curve generated by the Loewner equation with driving function a scaled Brownian motion. In fact, the Holder continuity exponent of the driving function will determine the quasisymmetric geometry of the curve. See \cite[Theorem~$1.1$ and $1.2$]{MR05}.

\begin{proof}[Proof of Theorem~\ref{SLE}]
	
	We first prove that Theorem \ref{SLE} holds for radial SLE on $\D$. Let $\gamma \: [0, \infty) \to \D$ be a parametrization of radial $\SLE_\kappa$ on $\D$. 	Recall that $\gamma$ is generated from the radial Loewner equation with the driving function of a scaled Brownian motion. Fix $t > 0$ and let $\gamma_t = \gamma([0,t])$. Since Brownian motion is a.s. not $1/2$-H\"older by L\'evy’s modulus of continuity \cite[Theorem~$1.14$]{MP10}, it follows \cite[Theorem~$1.2$ and Lemma $2.3$]{MR05} that a.s. $\gamma_t$ is not the image of a segment under a quasiconformal mapping $g: \C \to \C$. Ahlfors pointed out that a curve is the image of a quasiconformal mapping from $\C$ to $\C$ if and only if it is bounded turning, see \cite{Ahl63} or \cite{Ahl06}. Thus $\gamma_t$ is not bounded turning and not a quasiarc by \cite[Theorem~$4.9$]{TV80}.
	
    Following the same procedure and applying \cite[Theorem 1.1]{MR05}, we have the following result: Let $\gamma : [0, \infty) \to \H$ be a parametrization of chordal $\SLE_\kappa$ on $\H$, then $\gamma([0,t])$ is not a quasiarc a.s. for any $t > 0$.
	
    We now focus on the whole-plane SLE.  Note that any $\gamma_t$ is not a quasiarc a.s. where $\gamma_t = \gamma([0,t])$, as discussed above, is subcurve of radial $\SLE_\kappa$ on $\D$. Recall that $\nu_a$ is the image of the radial $\SLE_\kappa$ on $a \D$ from $a$ to $0$ under the map $f(z)=1/z$. Since $f(z) = 1/z$ is a quasisymmetry on any neighborhood of $z=1$ by Theorem \ref{compactQS}, we have
	\[
	\nu_a(\Omega_a) = 1
	\]
    where $\Omega_a$ is the $\nu_a$-event that the sample curve is not a quasiray.
	
    Note that the law $\nu$ of whole-plane $\SLE_\kappa$ from $0$ to $\infty$ is obtained as the weak limit of $\nu_a$ as $a\to0$. Let $\Omega_\infty = \cap_{n=1}^\infty \Omega_{1/n}$. Then $\Omega_\infty$ is a subset of the $\nu$-event that the sample curve is not a quasiray. Moreover, $\nu(\Omega_\infty) = 1$. This implies that the whole-plane SLE is not a quasiray a.s.. 
    
    Following the scaling invariant property of $\SLE$, we have the following general result: Let $\gamma: \R \to \C$ be a parametrization of whole-plane $\SLE_\kappa$ from $0$ to $\infty$, then $\gamma([a, b])$ is a.s. not a quasiarc for any $[a,b] \sub \R$.
	
    Recall that the driving function of a chordal $\SLE_\kappa(\rho)$ is define in equation \eqref{Equation: cSLErho} and the driving function of radial $\SLE_\kappa(\rho)$ and whole-plane $\SLE_\kappa(\rho)$ is define in equation \eqref{Equation: rSLErho}. Thus to prove that the chordal, radial and whole-plane $\SLE_\kappa(\rho)$ is not a.s. a quasiray, quasiarc and quasiray, respectively, it is sufficient to show that their driving functions are not $1/2$-H\"older. This is not hard to see since for all the three cases, the driving function $W_t$ are mutually absolutely continuous to $B_t$ in every small neighbourhood when $W_t \neq V_t$. This finishes the proof.
\end{proof}

\begin{Remark}\label{SLEgeneral}
	Notice that any continuous curve of whole-plane $\SLE_\kappa$ or whole-plane $\SLE_\kappa(\rho)$ from $0$ to $\infty$ is not a quasiarc a.s. as established in the proof of Theorem \ref{SLE}. Consequently, it follows from Theorem \ref{compactQS} that any continuous curve of whole-plane $\SLE_\kappa$ or whole-plane $\SLE_\kappa(\rho)$ from $p$ to $q$ is a.s. not a quasiarc for any $p ,q \in \C$.
\end{Remark}

\section{Quasisymmetric geometry of the CLE carpet}\label{Sec:Carpets}

To prove Theorem \ref{topcarpet}, we need the following result of Whyburn, see \cite{Why58} or \cite[Theorem~$5.1$]{HL23}, which characterizes topological carpets in $\C$.

\begin{Theorem}[Whyburn]\label{metriccarpet}
    Suppose $D$ is a simply connected domain in $\C$. Let $D_n \sub D, \ n \geq 0$ be a sequence of topological open disks satisfying the following conditions:
	\begin{enumerate}
		\item $\overline{D_i} \cap \overline{D_j} = \emptyset, \ \textrm{for} \ i \neq j$;
		
		\item $\diam(D_n) \to 0, \ \textrm{as} \ n \to \infty$;
		
		\item $\overline{\bigcup_n D_n} = \overline{D}$.
	\end{enumerate}
    Then the compact set $\overline{D} \backslash \bigcup_n D_n$ is homeomorphic to the standard Sierpi\'nski carpet $\mathbb{S}_{1/3}$.
\end{Theorem}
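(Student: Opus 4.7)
The plan is to invoke Whyburn's topological characterization of the standard Sierpi\'nski carpet: a metrizable subspace $X$ of $\mathbb{S}^2$ is homeomorphic to $\mathbb{S}_{1/3}$ if and only if $X$ is compact, connected, locally connected, has empty interior in $\mathbb{S}^2$, and has no local cut points (a local cut point being a point $p$ possessing a connected neighborhood $U$ such that $U \setminus \{p\}$ is disconnected). Setting $X \= \overline{D} \setminus \bigcup_n D_n$, the proof of Theorem \ref{metriccarpet} reduces to verifying these five properties from hypotheses (1)--(3). We may assume $D$ is bounded, since otherwise $\overline D$ is not compact and no conclusion of carpet type is possible.

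The two easy properties come first. Compactness holds because $X$ is $\overline{D}$ (compact) minus the open set $\bigcup_n D_n$. Empty interior follows from condition (3): every $p \in X \sub \overline{D} = \overline{\bigcup_n D_n}$ is a limit of points of $\bigcup_n D_n$, none of which lie in $X$, so $X$ is nowhere dense in $\mathbb{S}^2$.

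For connectedness, I would consider the finite truncations $X_N \= \overline{D} \setminus \bigcup_{n<N} D_n$. Each $X_N$ is $\overline{D}$ with finitely many open topological disks having pairwise disjoint closures (by (1)) removed; a standard planar-topology argument (using that removing an open disk with disjoint closure from a connected planar set leaves a connected ``perforated'' set) shows $X_N$ is compact and connected. Since $X = \bigcap_N X_N$ is a decreasing intersection of compact connected sets, $X$ is connected. Local connectedness at $p \in X$ follows a similar recipe: given $\eps > 0$, condition (2) implies only finitely many $D_n$ have diameter $\geq \eps$; inside a round ball $B(p, \eps) \cap \overline{D}$ we remove the finitely many large $D_n$'s that meet it. Because the remaining $D_n$'s meeting the ball are all of diameter less than $\eps$ and have disjoint closures, the resulting set is again a perforated disk and in particular connected, yielding a neighborhood basis of connected sets in $X$.

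The main obstacle is the absence of local cut points, which is the property that prevents $X$ from being, say, a topological arc. Fix $p \in X$ and a connected neighborhood $U$ of $p$ in $X$; the goal is to show $U \setminus \{p\}$ is connected. The crucial input is condition (3), which forces the $D_n$ to accumulate on every point of $X$: concretely, there is a sequence of disks $D_{n_k}$ with $\overline{D_{n_k}} \to \{p\}$ in the Hausdorff metric, and by condition (1) none of these closures contains $p$. Given $x, y \in U \setminus \{p\}$, I would build a path in $U \setminus \{p\}$ joining them by starting with a path in $U$ (from local connectedness) and, whenever this path enters a small ball around $p$, detouring along the Jordan boundary $\partial D_{n_k} \sub X$ of a sufficiently small $D_{n_k}$ near $p$. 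The technical subtleties to address are (i) confirming the detours remain inside $U$ (use local connectedness and the Hausdorff convergence $\overline{D_{n_k}} \to \{p\}$) and (ii) showing rigorously that the concatenation/limiting procedure produces a genuine connected subset of $X \setminus \{p\}$; this ultimately reduces to the planar-topology fact that deleting a single point from the boundary of a Jordan domain does not disconnect the complement. With these five properties established, Whyburn's characterization theorem completes the proof.
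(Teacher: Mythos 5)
The paper does not prove this statement at all: it is imported as a known theorem of Whyburn, with \cite{Why58} and \cite[Theorem~5.1]{HL23} as references, so there is no argument of the authors' to compare yours against. Your strategy --- verifying the hypotheses of Whyburn's characterization of $\mathbb{S}_{1/3}$ as a compact, connected, locally connected, nowhere dense planar continuum without local cut points --- is the standard route to this corollary, and the overall architecture (nested intersection of the finite truncations $X_N$ for connectedness, isolating the finitely many large disks for local connectedness, detours along boundaries of small complementary disks against local cut points) is the right one.

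Two caveats would matter if the sketch were meant to replace the citation. First, the connectedness and local connectedness steps use more than (1)--(3) actually provide: they need $\overline{D}$ itself to be locally connected and need that no $\overline{D_n}$ separates $\overline{D}$ (which can happen if $\overline{D_n}$ meets $\partial D$ in two points). For a simply connected $D$ whose boundary is not locally connected (e.g.\ a domain bounded in part by a topologist's sine curve), the set $X\supset\partial D$ inherits the failure of local connectedness and the conclusion is false as stated; the theorem implicitly assumes something like $D$ being a Jordan domain with $\overline{D_n}\subset D$, i.e.\ that $\widehat{\C}\setminus\overline{D}$ can be adjoined as one more complementary disk in the sphere version of Whyburn's statement. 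This is an imprecision of the statement rather than of your argument, and it is harmless in the paper's application, where the $\CLE_\kappa$ loops are a.s.\ compactly contained in $D$; but your proof cannot close it. Second, the absence of local cut points is the genuinely hard verification and is only gestured at: $D_{n_k}$ is merely a simply connected domain, so $\partial D_{n_k}$ need not be a Jordan curve along which one can ``detour,'' and the concatenation/limiting procedure you defer to is exactly where a real separation argument (say via unicoherence of $\mathbb{S}^2$) is required. As written, that step is a plan rather than a proof.
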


\begin{proof}[Proof of Theorem \ref{topcarpet}]

Let $\Gamma = \{\gamma_n\}_{n=1}^\infty$ be a $\CLE_\kappa$ configuration on a simply connected domain $D$ for a fixed $\kappa \in (\frac{8}{3}, 4]$, and we assume that the loops $\gamma_n$ are ordered in a way such that $\diam(\gamma_n) \geq \diam(\gamma_{n+1})$ for all $n \in \N$. Since $\CLE_\kappa$ configuration is a random loop configuration, we have the following conditions a.s.:
\begin{enumerate}
    \item $\{\gamma_n\}$ is a collection of disjoint non-nested simple loops;

    \item $\diam(\gamma_n)\to 0$ as $n\to\infty$.
\end{enumerate}
Setting $D_n$ as the open topological disk in $D$ such that $\partial D_n = \gamma_n$, then conditions $(1)$ and $(2)$ in Theorem \ref{metriccarpet} are naturally satisfied.

In the next step, we show that a.s. $\overline{\bigcup_n D_n} = \overline{D}$. Let $\mathcal{L}$ be a Brownian loop soup with intensity $c \mu_D$ in $D$, and let $L$ be the collection of all loops of $\mathcal{L}$. Fix a $z \in D$ and a $\varepsilon>0$, we claim that a.s. there exists a loop of $L$ in $B(z, \varepsilon)$ where $B(z, \varepsilon) := \{w \in D: |w-z|<\varepsilon\}$. In fact, it follows \cite[Proposition 14]{LW04} that the sum of all loops of $L$ that are in $B(z, \varepsilon)$ is infinite, thus $\card(\{l : l \in L, l \sub B(z, \varepsilon)\})$ is infinite.

Recall that  $\CLE_\kappa$ can be obtained as the outer boundaries of Brownian loop clusters \cite[Proposition~$10.2$]{SW12},  this implies that a.s. 
\[
B(z, \varepsilon) \cap \left(\bigcup_{n=1}^\infty D_n\right)\neq\emptyset.
\]
Then we have a.s. for all $\varepsilon\in \mathbb{Q} \cap (0, 1)$,
\[
B(z, \varepsilon) \cap \left(\bigcup_{n=1}^\infty D_n\right)\neq\emptyset.
\]
Thus  $z \in \overline{\bigcup_n D_n}$. Repeating this for all $z \in D$ with rational coordinates, we have a.s. $\overline{\bigcup_n D_n} = \overline{D}$.

According to Theorem \ref{metriccarpet}, Theorem \ref{topcarpet} then follows.
\end{proof}

As a preparation for the proof of Theorem~\ref{QScarpet}, we first introduce the definition of the SLE loop measure on $\C$, originally given in \cite{Zha21}.

Given two points $p,q\in\C$, we define $\SLE_\kappa^{p\rightleftharpoons q}$ to be be the law of the simple loop $\gamma\in\C$ obtained as follows. Let $\gamma_1$ be a whole-plane $\SLE_\kappa(2)$ curve from $p$ to $q$, and $\gamma_2$ be a conditionally independent chordal $\SLE_\kappa$ from $q$ to $p$ in $\C\backslash\gamma_1$. Then $\gamma$ is the concatenation of $\gamma_1$ and $\gamma_2$.

\begin{Definition}\label{Def:SLEloop}
    The \emph{$\SLE_\kappa$ loop measure} on $\C$ is an infinite measure of the space of simple loops on $\C$ defined as follows: For any measurable subset $A$ of collection of simple loops,
    \begin{equation}\label{eq:def-sleloop}
        \SLE_\kappa^{\rm loop}(d\gamma) = \frac{1}{\mathcal{M}(\gamma)^2} \iint_{\C\times\C}\frac{1}{|p-q|^{2(2-d)}}\SLE_\kappa^{p\rightleftharpoons q}(d\gamma) \ dp \ dq,
    \end{equation}
    where $d=1+\frac{\kappa}{8}$ and $\mathcal{M}(\gamma)$ is the $(1+\frac{\kappa}{8})$-dimensional Minkowski content of $\gamma$. See \cite{LR15} for more information on the Minkowski content of $\SLE$.
\end{Definition}

\begin{Proposition}\label{prop:non-qc}
    $\SLE_\kappa^{\rm loop}(\{\gamma: \gamma \ \textnormal{is a quasicircle on} \ \C \})=0$.
\end{Proposition}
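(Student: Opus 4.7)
The plan is to reduce the claim to Remark~\ref{SLEgeneral}, which already rules out the whole-plane $\SLE_\kappa(\rho)$ curves from being quasiarcs. Starting from the definition \eqref{eq:def-sleloop}, I would first note that by Tonelli the $\SLE_\kappa^{\rm loop}$-measure of the set of quasicircle loops equals an $(p,q)$-integral of $\SLE_\kappa^{p\rightleftharpoons q}$-expectations of $1/\mathcal{M}(\gamma)^2$ restricted to the quasicircle event. So it is enough to prove
\[
\SLE_\kappa^{p\rightleftharpoons q}\bigl(\{\gamma: \gamma\text{ is a quasicircle on } \C\}\bigr)=0 \qquad \text{for Lebesgue a.e.\ } (p,q)\in\C\times\C,
\]
after which the weight $1/(\mathcal{M}(\gamma)^2|p-q|^{2(2-d)})$ is irrelevant.

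Next I would unpack the construction of $\SLE_\kappa^{p\rightleftharpoons q}$ given just before Definition~\ref{Def:SLEloop}: a sample $\gamma$ is the concatenation $\gamma_1\cup\gamma_2$ of a whole-plane $\SLE_\kappa(2)$ curve $\gamma_1$ from $p$ to $q$ and a conditionally independent chordal $\SLE_\kappa$ curve $\gamma_2$ from $q$ to $p$ in $\C\setminus\gamma_1$. Since $\gamma$ is a simple loop, $\gamma_1$ and $\gamma_2$ are closed arcs meeting exactly at $\{p,q\}$, so $\gamma_1$ is a closed sub-arc of $\gamma$.

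The remaining deterministic input I would need is that a closed sub-arc of a quasicircle is itself a quasiarc. This follows directly from the definition of a quasicircle as a quasisymmetric image $f(\mathbb{S}^1)$: for any closed sub-arc $\alpha\subset\mathbb{S}^1$, a M\"obius reparametrization puts $\alpha$ into quasisymmetric correspondence with $[0,1]$, and composing with $f$ exhibits $\gamma_1=f(\alpha)$ as a quasisymmetric image of an interval. Therefore, if $\gamma$ were a quasicircle, $\gamma_1$ would be a quasiarc. But by Remark~\ref{SLEgeneral}, the whole-plane $\SLE_\kappa(2)$ curve $\gamma_1$ from $p$ to $q$ is almost surely not a quasiarc, for every pair $(p,q)\in\C\times\C$. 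This contradiction gives the desired vanishing.

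The main (mild) obstacle I anticipate is writing the Tonelli/decomposition step cleanly, since $\SLE_\kappa^{\rm loop}$ is an infinite measure and the integrand carries the random Minkowski content $\mathcal{M}(\gamma)$; as soon as the inner event has zero $\SLE_\kappa^{p\rightleftharpoons q}$-probability this issue dissolves. Beyond this bookkeeping, no probabilistic input is needed besides Remark~\ref{SLEgeneral} and the elementary sub-arc observation above; in particular, the chordal piece $\gamma_2$ plays no role.
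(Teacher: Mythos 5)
Your proposal is correct and follows essentially the same route as the paper: reduce via the disintegration in Definition~\ref{Def:SLEloop} to showing $\SLE_\kappa^{p\rightleftharpoons q}$ assigns zero mass to quasicircles, then observe that the whole-plane $\SLE_\kappa(2)$ arc $\gamma_1$ is a.s.\ not a quasiarc (Remark~\ref{SLEgeneral}), hence $\gamma$ is a.s.\ not a quasicircle. The paper's proof is terser but identical in substance; your extra remarks on the Tonelli step and on closed sub-arcs of quasicircles being quasiarcs merely make explicit what the paper leaves implicit.
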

\begin{proof}
    The proof is quite straightforward, following directly from a combination of Definition \ref{Def:SLEloop} and  Remark \ref{SLEgeneral}. 
    
    First notice that it is sufficient to show that $\SLE_\kappa^{p\rightleftharpoons q}(\{\gamma: \gamma {\rm\ is\ a\ quasicircle} \})=0$ for any $p,q\in\C$ by the definition of SLE loop measure. Let $\gamma$, $\gamma_1$ and $\gamma_2$ be curves as defined in the definition of $\SLE_\kappa^{p\rightleftharpoons q}$. Then $\gamma_1$ is not a quasiarc a.s. by Remark \ref{SLEgeneral}, which implies that $\gamma$ is a.s. not a quasicircle under the law $\SLE_\kappa^{p\rightleftharpoons q}$. The result then follows.
\end{proof}

The $\SLE_\kappa$ loop measure is closely related to the whole-plane version of $\CLE_\kappa$ defined in \cite[Section 3.2]{KW16}. To this end we need the notion of {\it nested} $\CLE_\kappa$. Namely, for a simply connected domain $D$, the nested $\CLE_\kappa$ on $D$ is defined from the (non-nested) $\CLE_\kappa$ by an iterative procedure: First sample a non-nested $\CLE_\kappa$ on $D$, which we refer to as the first generation. Then independently sample non-nested $\CLE_\kappa$s inside each loop in the first generation, resulting in the second generation. This procedure is repeated iteratively, continuing to infinity. The \emph{whole-plane $\CLE_\kappa$}, which we denote as $\CLE_\kappa^\C$, can be obtained as the weak limit of {\it nested} $\CLE_\kappa$ on $R\D$ as $R\to\infty$.

The following result establishes a relation between $\SLE_\kappa^{\rm loop}$ and the whole-plane $\CLE_\kappa^\C$.

\begin{Proposition}[{\cite[Theorem 1.1]{ACSW}}]\label{prop:equivalence}
	The $\SLE_\kappa^{\rm loop}$ equals the counting measure on the collection of loops in the whole-plane $\CLE_\kappa^\C$. More precisely, for any measurable subset $A$ of the space of simple loops on $\C$, we have
	\begin{equation}\label{eq:counting}
		\SLE_\kappa^{\rm loop}(A)=\int \left(\sum_{\gamma\in\Gamma}{\bf 1}_{\gamma \in A}\right)\CLE_\kappa^\C(d \Gamma),
	\end{equation}
	where $\Gamma$ stands for the configuration of $\CLE_\kappa^\C$.
\end{Proposition}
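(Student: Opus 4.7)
The plan is to establish this identity via a Palm-type computation that relates the Minkowski content of $\CLE_\kappa^\C$ loops to the $\SLE_\kappa$ loop measure. For each loop $\gamma$ in a sample $\Gamma$ of $\CLE_\kappa^\C$, let $M_\gamma$ denote the natural $d$-dimensional Minkowski content measure supported on $\gamma$, so that $M_\gamma(\C) = \mathcal{M}(\gamma)$. Form the aggregated occupation measure $M_\Gamma := \sum_{\gamma\in\Gamma} M_\gamma$ on $\C$ and, for any measurable set $A$ of simple loops, introduce the two-point measure
\begin{equation*}
\widetilde\mu(A; dp, dq) \= \int \sum_{\gamma\in\Gamma} \mathbf{1}_A(\gamma)\, M_\gamma(dp)\, M_\gamma(dq)\, \CLE_\kappa^\C(d\Gamma),
\end{equation*}
which records pairs of points lying on a common CLE loop, weighted by Minkowski content.

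The next step is to compute the $(p,q)$-marginal of $\widetilde\mu$. Since $\CLE_\kappa^\C$ is invariant in law under translations and scalings of $\C$, and since Minkowski content transforms covariantly as $\mathcal{M}(\lambda\gamma) = \lambda^{d}\mathcal{M}(\gamma)$, a change of variables together with a one-point normalization (using translation invariance to reduce to a single expectation) forces the $(p,q)$-marginal to have a density proportional to $|p-q|^{-2(2-d)}\, dp\, dq$. The exponent $2(2-d) = 2 - \kappa/4$ is dictated by the scaling dimension $d=1+\kappa/8$ appearing twice through the two Minkowski content factors; the constant of proportionality is absorbed by the normalization built into Definition~\ref{Def:SLEloop}.

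The third, and main, step is to identify the conditional law of $\gamma$ given that both $p$ and $q$ lie on it as precisely $\SLE_\kappa^{p\rightleftharpoons q}$. Given the marked pair $(p,q)$, one splits the loop at these points into two arcs $\gamma_1,\gamma_2$. Using CLE exploration and the domain Markov property of $\CLE_\kappa$, together with SLE reversibility in the simple regime $\kappa\in(\tfrac{8}{3},4]$, one argues that $\gamma_1$ has the law of a whole-plane $\SLE_\kappa(2)$ curve from $p$ to $q$, and conditionally on $\gamma_1$, the other arc $\gamma_2$ is a chordal $\SLE_\kappa$ from $q$ to $p$ in the complementary simply connected domain $\C\setminus\gamma_1$. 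This is exactly the description of $\SLE_\kappa^{p\rightleftharpoons q}$ recalled before Definition~\ref{Def:SLEloop}. Combining all three steps gives
\begin{align*}
\int \sum_{\gamma\in\Gamma} \mathbf{1}_A(\gamma)\, \CLE_\kappa^\C(d\Gamma)
&= \int \sum_{\gamma\in\Gamma} \frac{\mathbf{1}_A(\gamma)}{\mathcal{M}(\gamma)^2}\, M_\gamma(\C)\, M_\gamma(\C)\, \CLE_\kappa^\C(d\Gamma) \\
&= \iint_{\C\times\C} \frac{1}{|p-q|^{2(2-d)}}\, \mathbb{E}^{\SLE_\kappa^{p\rightleftharpoons q}}\!\left[\frac{\mathbf{1}_A(\gamma)}{\mathcal{M}(\gamma)^2}\right] dp\, dq,
\end{align*}
which is precisely $\SLE_\kappa^{\rm loop}(A)$ as defined in \eqref{eq:def-sleloop}.

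The principal obstacle will be step three: rigorously identifying the two-marked-point conditional law of a $\CLE_\kappa^\C$ loop as $\SLE_\kappa^{p\rightleftharpoons q}$. This requires assembling several non-trivial ingredients, notably SLE reversibility results of Miller--Sheffield, the coupling between whole-plane $\SLE_\kappa(2)$ and chordal $\SLE_\kappa$ needed to decompose a loop with two marked points, and a careful CLE exploration argument. A secondary technical issue is justifying the Minkowski-content manipulations and the finiteness of $\mathcal{M}(\gamma)$ on CLE loops, which relies on the Lawler--Rezaei construction and its extensions to CLE. Once both are available, the computation of the two-point density from scale and translation invariance and the final Fubini rearrangement are largely bookkeeping.
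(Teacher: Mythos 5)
The paper does not supply a proof of Proposition~\ref{prop:equivalence}: it is stated as a citation of \cite[Theorem 1.1]{ACSW}, so there is nothing internal to compare your argument against. What follows is therefore a comparison with the argument that \cite{ACSW} actually uses, together with an assessment of your sketch on its own terms.

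Your outline is the natural ``Palm calculus'' strategy: rooting the counting measure on $\CLE_\kappa^\C$ loops at a pair of points sampled from the Minkowski-content occupation measure, computing the two-point marginal by invariance, and identifying the conditional law of the rooted loop as $\SLE_\kappa^{p\rightleftharpoons q}$. This is a coherent and correct high-level plan, but it is \emph{not} the route taken in \cite{ACSW}. That paper establishes \eqref{eq:counting} through Liouville quantum gravity: both the $\SLE_\kappa^{\rm loop}$ measure and the counting measure on $\CLE_\kappa^\C$ loops are realized as decorations of quantum surfaces via conformal welding (quantum disks welded along the two $\SLE$ arcs on the loop side, quantum spheres/disks on the CLE side), and the identity is read off from the matching of the resulting welded quantum surfaces. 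The LQG framework is what resolves, in one stroke, both of the points you flag as obstacles: it produces the conditional law of the two-pointed loop and it pins down the exact normalization constant. A purely intrinsic Palm argument of the type you sketch has not, to my knowledge, been carried out in the literature, and it is not clear that it can be made rigorous without essentially re-deriving the LQG inputs.

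Two substantive gaps remain in your sketch even as a strategy. First, in step 2 you claim translation and scale covariance ``force'' the two-point density $|p-q|^{-2(2-d)}$ up to a constant, and that the constant ``is absorbed by the normalization built into Definition~\ref{Def:SLEloop}.'' But Definition~\ref{Def:SLEloop} already fixes a specific normalization (the bare prefactor $1/\mathcal{M}(\gamma)^2$ with no free constant), and the proposition asserts an \emph{equality}, not a proportionality; matching the multiplicative constant is a genuine computation and cannot be absorbed away. Second, and more seriously, step 3 is the entire content of the theorem. Saying that CLE exploration, the domain Markov property, and SLE reversibility ``give'' the $\SLE_\kappa^{p\rightleftharpoons q}$ description of the conditional law is a statement of what needs to be proved, not a proof; you acknowledge this, but without it the argument is a reduction, not a proof. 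Additionally, the Fubini rearrangement you perform requires integrability of $\mathcal{M}(\gamma)^{-2}$ against the two-point Palm measure and a.s. finiteness and positivity of $\mathcal{M}(\gamma)$ for CLE loops, which needs a separate justification beyond \cite{LR15}.

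In summary: your proposal is a reasonable and honest sketch of an alternative, LQG-free strategy, but it is a different route from \cite{ACSW}, it leaves the central conditional-law identification unproved, and it elides the constant-matching problem.
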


The following lemma establishes a relation between the whole-plane $\CLE_\kappa$ and $\CLE_\kappa$ on a simply connected domain. It is essentially the Markov property of $\CLE_\kappa$, and can be obtained immediately from the construction of whole-plane $\CLE_\kappa$ in \cite{KW16}. We denote by $D(\gamma)$ the bounded domain surrounded by $\gamma$ for a any simple loop $\gamma \sub \C$.

\begin{Lemma}\label{lem:markov}
    Let $\Gamma$ be the loop configuration of a whole-plane $\CLE_\kappa$ and $\widetilde\gamma$ be the outermost loop in $\{\gamma\in\Gamma:\gamma \sub \D\}$. Then given $\widetilde{\gamma}$ and the domain $D(\widetilde{\gamma})$ bounded by $\widetilde{\gamma}$, the conditional law of $\Gamma|_{D(\widetilde{\gamma})}=\{\gamma\in\Gamma:\gamma\sub D(\widetilde{\gamma})\}$ equals the independent nested $\CLE_\kappa$ on $D(\widetilde{\gamma})$.
\end{Lemma}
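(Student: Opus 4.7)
The plan is to exploit the construction of the whole-plane $\CLE_\kappa$ as the weak limit of nested $\CLE_\kappa$ configurations on $R\D$ as $R \to \infty$, combined with the branching property that is built into the iterative definition of nested $\CLE_\kappa$. Since the paper flags this as essentially the Markov property of $\CLE_\kappa$, the argument should be a fairly direct packaging of the construction.

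First I would fix $R > 1$ and write $\Gamma^R$ for the nested $\CLE_\kappa$ on $R\D$. By construction, $\Gamma^R$ is sampled generation by generation: a non-nested $\CLE_\kappa$ is drawn on $R\D$, and then inside each loop an independent non-nested $\CLE_\kappa$ is drawn, and so on. This immediately yields the following branching statement: for every loop $\gamma_0 \in \Gamma^R$, conditionally on $\gamma_0$ and on everything outside $D(\gamma_0)$, the family $\Gamma^R|_{D(\gamma_0)} = \{\gamma \in \Gamma^R : \gamma \sub D(\gamma_0)\}$ has the law of an independent nested $\CLE_\kappa$ on $D(\gamma_0)$. I would apply this to $\gamma_0 = \widetilde{\gamma}^R$, the outermost loop of $\Gamma^R$ contained in $\D$, which is a measurable function of the configuration once one fixes how to order outermost loops. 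This gives the desired Markov statement at the approximation level.

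Second, I would pass to the limit $R \to \infty$. Along any subsequence for which the weak convergence $\Gamma^R \Rightarrow \Gamma$ is realized almost surely, the outermost loop $\widetilde{\gamma}^R$ contained in $\D$ converges to $\widetilde{\gamma}$, and the loops nested inside converge to $\Gamma|_{D(\widetilde{\gamma})}$. The property ``conditionally on the enclosing loop $\gamma_0$, the interior collection is an independent nested $\CLE_\kappa$ on $D(\gamma_0)$'' can be encoded by comparing conditional expectations of bounded continuous test functions of the interior configuration against the $\CLE_\kappa$ law on $D(\gamma_0)$, which depends continuously on $\gamma_0$ through conformal invariance of nested $\CLE_\kappa$. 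This testing characterization is stable under weak convergence and so transfers from $\Gamma^R$ to $\Gamma$.

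The main obstacle I anticipate is verifying the soft-analytic details of the limiting step: namely, that the outermost loop of $\Gamma$ contained in $\D$ is almost surely unique and is a continuous functional of the configuration on the full-measure set where no loop of $\Gamma$ is tangent to $\partial\D$, and that conformal invariance of nested $\CLE_\kappa$ transfers the interior law on $D(\widetilde{\gamma}^R)$ cleanly to $D(\widetilde{\gamma})$ (Carath\'eodory convergence of the uniformizing maps suffices). Both are standard given the a.s.\ density of $\CLE_\kappa$ loops at every interior scale established during the proof of Theorem \ref{topcarpet}, so the entire argument reduces to bookkeeping around the construction in \cite{KW16}.
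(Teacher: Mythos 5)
Your proposal is correct and follows essentially the same route as the paper: both first establish the conditional-law statement for nested $\CLE_\kappa$ on $R\D$ using the branching structure of the iterative construction, and then pass to the weak limit as $R\to\infty$. Your additional remarks on the soft-analytic points of the limiting step (a.s.\ non-tangency, Carath\'eodory convergence) are sensible caveats but the paper, citing \cite{KW16}, does not spell them out.
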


\begin{proof}
This is a straightforward result from \cite[Section 3.2]{KW16}, and here we briefly explain the idea. Let $\Gamma^{R}$ be sampled from the nested $\CLE_\kappa$ on ${R\D}$ (with $R>1$), and $\widetilde\gamma^{R}$ be the outermost loop in $\left\{\gamma\in\Gamma^{R}:\gamma \sub \D\right\}$. Then according to the construction of nested $\CLE_\kappa$, given $\widetilde{\gamma}^{R}$ and the domain $D(\widetilde{\gamma}^R)$ bounded by $\widetilde{\gamma}^R$, the conditional law of $\Gamma^{R}|_{D(\widetilde{\gamma}^R)}=\{\gamma\in\Gamma^{R}:\gamma\sub D(\widetilde{\gamma}^R)\}$ is the independent nested $\CLE_\kappa$ on $D(\widetilde{\gamma}^R)$. The lemma then follows by taking the weak limit as $R\to\infty$.
\end{proof}

According to Proposition \ref{prop:non-qc} and Lemma~\ref{lem:markov}, we can show that for any simply connected domain $D$ and $\CLE_\kappa$ on $D$, a.s. any loop in the $\CLE_\kappa$ configuration is not a quasicircle.

\begin{Theorem}\label{Theorem: nonquasiarc}
     Let $\Gamma$ be the loop configuration of $\CLE_\kappa$ on a simply connected domain $D$, then a.s. $\gamma$ is not a quasicircle for any $\gamma \in \Gamma$.
\end{Theorem}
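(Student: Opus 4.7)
The plan is to prove the statement by working first at the level of the whole-plane $\CLE_\kappa^{\C}$, using the identification between the $\SLE_\kappa$ loop measure and the counting measure on the loops of $\CLE_\kappa^{\C}$ provided by Proposition \ref{prop:equivalence}, and then transferring the conclusion to $\CLE_\kappa$ on an arbitrary simply connected domain via Lemma \ref{lem:markov} together with the conformal invariance of $\CLE_\kappa$ and the local quasisymmetry of conformal maps given by Theorem \ref{compactQS}.

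First I would let $A$ denote the measurable set of simple loops in $\C$ that are quasicircles. Proposition \ref{prop:non-qc} gives $\SLE_\kappa^{\mathrm{loop}}(A)=0$, and Proposition \ref{prop:equivalence} then yields
$$0 \;=\; \SLE_\kappa^{\mathrm{loop}}(A) \;=\; \int \Big(\sum_{\gamma \in \Gamma} \mathbf{1}_{\{\gamma \in A\}}\Big)\, \CLE_\kappa^{\C}(d\Gamma).$$
As the integrand is a non-negative random variable, it vanishes $\CLE_\kappa^{\C}$-a.s., i.e., almost surely no loop of the whole-plane $\CLE_\kappa^{\C}$ is a quasicircle. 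Next, by Lemma \ref{lem:markov}, the conditional law of $\Gamma|_{D(\widetilde\gamma)}$ given the outermost loop $\widetilde\gamma$ inside $\D$ is an independent nested $\CLE_\kappa$ on $D(\widetilde\gamma)$; disintegrating the previous a.s.\ statement over $\widetilde\gamma$, for a.e.\ realization of $\widetilde\gamma$ no loop of this nested $\CLE_\kappa$ is a quasicircle. Fixing any such realization $\widetilde\gamma_0$ and setting $D_0 := D(\widetilde\gamma_0)$, the first-generation loops of the nested $\CLE_\kappa$ on $D_0$ are by definition a sample of the (non-nested) $\CLE_\kappa$ on $D_0$, so a.s.\ no loop of $\CLE_\kappa$ on this specific Jordan domain $D_0$ is a quasicircle.

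To upgrade to an arbitrary simply connected $D$, let $\phi : D \to D_0$ be a Riemann map. By conformal invariance of $\CLE_\kappa$, a sample of $\CLE_\kappa$ on $D$ has the law of $\phi^{-1}(\Gamma')$ with $\Gamma'$ a $\CLE_\kappa$ on $D_0$, so it suffices to show loop by loop that $\phi^{-1}(\gamma')$ is a quasicircle if and only if $\gamma'$ is. Each $\gamma' \in \Gamma'$ is a compact subset of the open domain $D_0$, hence admits a relatively compact open neighbourhood $U$ with $\overline{U} \sub D_0$; on $\overline{U}$ the map $\phi^{-1}$ is conformal and thus $1$-quasiconformal, so Theorem \ref{compactQS} makes $\phi^{-1}|_{\overline{U}}$ a quasisymmetry. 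Composing with a quasisymmetric parametrization of $\gamma'$ shows that $\phi^{-1}(\gamma')$ is a quasicircle whenever $\gamma'$ is, and the symmetric argument applied to $\phi$ on a compact neighbourhood of $\phi^{-1}(\gamma') \sub D$ gives the converse. Combined with the previous paragraph, this concludes the proof.

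The main obstacle I anticipate is in this final transfer: one must ensure that the disintegration genuinely yields a deterministic Jordan domain $D_0$ on which conformal invariance can be invoked, and that Theorem \ref{compactQS} is applied strictly to compact subsets of the interior, so that no boundary regularity of $\phi$ (which in general may fail) is ever required. With those care points handled, the remaining steps are a direct combination of the tools already assembled.
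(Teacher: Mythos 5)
Your proposal is correct and uses exactly the same key ingredients as the paper: Propositions \ref{prop:non-qc} and \ref{prop:equivalence} to kill quasicircle loops in the whole-plane $\CLE_\kappa^\C$, Lemma \ref{lem:markov} to pass to a bounded Jordan domain, and conformal invariance combined with Theorem \ref{compactQS} to transfer to arbitrary simply connected $D$. The only difference is organizational: the paper argues via the expected number of quasicircle loops (a constant $C$ independent of $D$, forced to zero by a chain of integral inequalities), whereas you argue directly with almost-sure events and then conjugate by a Riemann map at the end -- which is arguably slightly cleaner since it sidesteps any discussion of whether the constant $C$ is a priori finite.
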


\begin{proof}
    For any simply connected domain $D$, we denote by $\CLE_\kappa^D$ the law of $\CLE_\kappa$ on $D$. Let $A$ be the collection of all quasicircles on $\C$. Then by the conformal invariance of $\CLE_\kappa$ and Theorem \ref{compactQS}, we 
    \[
    \int \left(\sum_{\gamma\in\Gamma}{\bf 1}_{\gamma \in A}\right)\CLE_\kappa^D(d \Gamma) = C
    \]
    where $C$ is a constant independent of $D$.
    
   It follows Propositions~\ref{prop:non-qc} and~\ref{prop:equivalence} that $\int \left(\sum_{\gamma\in\Gamma}{\bf 1}_{\gamma \in A}\right)\CLE_\kappa^\C(d \Gamma) = 0$.  Recall the settings in Lemma~\ref{lem:markov}. Let $\nu(d\widetilde\gamma)$ be the law of the outermost loop in the configuration of whole-plane $\CLE_\kappa$ which surrounds the origin and is contained in $\D$.  Let $\CLE_{\kappa, {\rm nested}}^{D}$ be the law of nested $\CLE_\kappa$ on $D$. Since the nested CLE is defined by an iteration procedure from non-nested CLE, we have for all simply connected domain $D$,
    $$\int \left(\sum_{\gamma \in\Gamma}{\bf 1}_{\gamma \in  A}\right) \CLE_\kappa^{D}(d \Gamma) \le\int \left(\sum_{\gamma \in\Gamma}{\bf 1}_{\gamma \in  A}\right) \CLE_{\kappa, {\rm nested}}^{D}(d \Gamma).$$
    Then we obtain
    \begin{align*}
        \int \int \left(\sum_{\gamma \in\Gamma}{\bf 1}_{\gamma \in  A}\right) \CLE_\kappa^{D(\widetilde{\gamma})}(d \Gamma) \nu(d\widetilde{\gamma}) & \leq \int \int \left(\sum_{\gamma \in\Gamma}{\bf 1}_{\gamma \in  A}\right) \CLE_{\kappa,\ {\rm nested}}^{D(\widetilde{\gamma})}(d \Gamma) \nu(d\widetilde{\gamma})
        \\
        & \leq \int \left(\sum_{\gamma\in\Gamma}{\bf 1}_{\gamma \in A}\right)\CLE_\kappa^\C(d \Gamma) = 0.        
    \end{align*}
     while the last inequality follows from Lemma~\ref{lem:markov}. 
    
    In fact, we also have that
    \[
    \int \int \left(\sum_{\gamma \in\Gamma}{\bf 1}_{\gamma \in  A}\right) \CLE_\kappa^{D(\widetilde{\gamma})}(d \Gamma) \nu(d\widetilde{\gamma}) = C \geq 0
    \]
    since $\int \left(\sum_{\gamma \in\Gamma}{\bf 1}_{\gamma \in  A}\right) \CLE_\kappa^{D(\widetilde{\gamma})}$ equals $C$ for all $\widetilde{\gamma}$.
    
    Then we conclude
    \begin{equation}
        \int \left(\sum_{\gamma\in\Gamma}{\bf 1}_{\gamma \in A}\right)\CLE_\kappa^D(d \Gamma)=0,
    \end{equation}
    i.e. almost surely every loop in a $\CLE_\kappa$ configuration on $D$ is not a quasicircle.
\end{proof}

According to Theorem \ref{Theorem: nonquasiarc}, individual loops in the CLE configuration are not quasicircles a.s.. Thus proves the Theorem  \ref{QScarpet}.

\section{Further questions}\label{sec:FQ}

Inspired by the quasisymmetric uniformization projects of metric carpets \cite{Bon11, BM13, Mer10, MTW13, Hak22, HL23}, we are asking the following questions:

\textbf{Question 1.} What is the most reasonable quasisymmetric uniformizing space for random carpets?  The round carpet plays this role in the classical world of geometric group theory and complex dynamics. Theorem \ref{QScarpet} shows the stochastic setting requires a different model.

\textbf{Question 2.} Recall the Hausdorff dimension of $\CLE_\kappa$, $\kappa \in (\frac{8}{3}, 8]$ space is a.s. $2-\frac{(3\kappa-8)(8-\kappa)}{32\kappa}$ \cite{SSW09, SW11, MSW14}. Moreover, the Hausdorff dimension of one $\CLE_\kappa$ loop is a.s. $1+\frac{\kappa}{8}$ since any loop in a $\CLE_\kappa$ configuration is locally absolutely continuous with $\SLE_\kappa$. Can one lower these by applying a quasisymmetric homeomorphism? We define the \emph{conformal dimension} of a metric space be the infimum of the Hausdorff dimensions of all its quasisymmetric images. For example, the graph of one-dimensional Brownian motion has a.s. conformal dimension $\frac{3}{2}$, see \cite[Theorem~$1.1$]{BHL}.

We expect the conformal dimension of a $\CLE_\kappa$ space equals to its Hausdorff dimension a.s. for each $\kappa \in (\frac{8}{3}, 8]$. Intuitively, we expect that the $\CLE_\kappa$ space holds some product-like structure and such a structure is crucial in determine the conformal dimension of a metric space. See \cite{BHL}. We give a rough description of the product like structure. Let $\{S_t\}_ {t \in \R}$ be a collection of parallel lines in $\C$ such that it forms a disjoint cover of $\C$. Given a $\CLE_\kappa$ space $X$, we expect that a.s $\dim_H(X \cap S_t) = \dim_H(X) -1$ for a.e. $t \in \R$ when $S_t \cap X \neq \emptyset$. This is a natural phenomenon from the probabilistic point of view. For example, consider a critical Ising model with $+$ boundary condition, whose scaling limit is $\CLE_3$. The collection of sites that have a path to the boundary with all $+$ spins correspond to the $\CLE_3$ carpet. Then if we denote $p^\delta(z^\delta)$ to be the probability that $z^\delta$ is in the carpet, then we have $p^\delta(z^\delta)\sim \delta^{2-d}$ where $d=\dim_H(X)$. Therefore, the expected number in $S_t^\delta\cap X^\delta$ is similar to $\delta^{2-d}\cdot\delta^{-1}=\delta^{1-d}$. This suggests that the $\CLE_\kappa$ space is expected to exhibit a product-like structure.

\end{document}